\edef\restoreparindent{\parindent=\the\parindent\relax}
\newtheorem{Lthm}{Theorem}
\newtheorem{theorem}{Theorem}
\numberwithin{theorem}{section}
\newtheorem{lemma}[theorem]{Lemma}
\newtheorem{proposition}[theorem]{Proposition}
\theoremstyle{definition}
\newtheorem{remark}[theorem]{Remark}
\newtheorem{question}[theorem]{Question}
\newtheorem{problem}[theorem]{Problem}
\newtheorem{example}[theorem]{Example}
\newtheorem{definition}[theorem]{Definition}
\renewcommand{\Tr}{\text{Tr}}
\renewcommand{\P}{\mathbb{P}}
\newcommand{\Sym}{\textup{Sym}}
\newcommand{\Alb}{\textup{Alb}}
\renewcommand{\d}{\mathfrak{d}}
\DeclareMathOperator{\gon}{gon}
\DeclareMathOperator{\irr}{irr}
\DeclareMathOperator{\pr}{pr}
\DeclareMathOperator{\Supp}{Supp}
\DeclareMathOperator{\id}{id}
\newcommand{\mybigwedge}{\raisebox{.32ex}{\scalebox{0.75}{$\bigwedge$}}}
\newlength{\LETTERheight}
\newcommand*{\longleadsto}[1]{\ \raisebox{0.24\LETTERheight}{\tikz \draw [->,
line join=round,
decorate, decoration={
    zigzag,
    segment length=6,
    amplitude=2,
    post=lineto,
    post length=2pt
}] (0,0) -- (#1,0);}\ }
\numberwithin{equation}{section}
\begin{document}

\title{Rational maps from products of curves to surfaces with $p_g = q = 0$}

\author{Nathan Chen and Olivier Martin}


\address{Department of Mathematics, Harvard University, Cambridge, MA 02138}
\email{nathanchen@math.harvard.edu}

\address{Department of Mathematics, Stony Brook University, Stony Brook, NY 11794-3651}
\email{olivier.martin@stonybrook.edu}

\thanks{The first author's research was partially supported by an NSF postdoctoral fellowship, DMS-2103099.}

\maketitle

\begin{abstract}
We study dominant rational maps from a product of two curves to surfaces with $p_{g} = q = 0$. Given two curves which satisfy a mild genericity assumption and have large genus relative to their gonality, we show that the degree of irrationality of their product is equal to the product of their gonalities. Moreover, we prove that the degree of irrationality of a product of two hyperelliptic curves is 4.
\end{abstract}

\section*{Introduction}

Dominant rational maps from a product of two curves $C_1$ and $C_2$ to a projective surface $S$ have been studied by Bastianelli, Lee, and Pirola in \cite{BP13,LP16}. Lee and Pirola prove that if $g(C_1)\geq 7$, $g(C_2)\geq 3$, and $C_1, C_2$ are very general then $S$ must be ruled. Instead of describing restrictions on the geometry of $S$, we assume $p_{g}(S)=q(S) = 0$ and give sharp lower bounds on the degrees of such maps when the gonal maps of $C_1$ and $C_2$ do not factor and these curves have large genus relative to their gonality. Recall that a \textit{gonal map} for a curve $C$ is a map $C \longrightarrow \P^{1}$ of minimal degree.

Our motivation stems in part from the desire to understand the \textit{degree of irrationality} of a product of two curves. Recall that the degree of irrationality of a projective variety $X$ is defined as:
\[ \irr(X)  \ =_{\text{def}}  \ \min \left\{ \delta > 0 \mid \exists \text{ dominant rational map } X \dashrightarrow \P^{n}\text{ of degree } \delta \right \}. \]
This birational invariant generalizes the classical notion of \textit{gonality} of an algebraic curve to varieties of higher dimension and is equal to $1$ exactly when $X$ is rational. A considerable amount of recent work has been devoted to estimating and calculating this invariant for various classes of algebraic varieties -- see \cite{BDELU17} and \cite[\S2.2]{MThesis} for a survey. Most relevant to the subject at hand are bounds for measures of irrationality of the symmetric square of a curve given by Bastianelli \cite{Bastianelli12}. For products of curves of genus $\geq 2$, there are naive bounds
\[ \gon(C_{1}) \cdot \gon(C_{2})  \ \geq \  \irr(C_{1} \times C_{2}) \  \geq \  \max\{ \gon(C_{1}), \gon(C_{2}) \} \]
coming from the product of the gonality maps and \cite[Theorem 2]{HM82}.

We first prove that the this upper bound is achieved for products of hyperelliptic curves.\footnote{By hyperelliptic curve, we mean a curve of genus at least $2$ with a $g^{1}_{2}$.}

\begin{Lthm}\label{thm:hyp}
Consider a dominant rational map $\varphi \colon C_{1} \times C_{2} \dashrightarrow S$ of degree $3$ from a product of curves to a surface with $p_{g}(S) = q(S) = 0$ and suppose that $g(C_1)\leq g(C_2)$. Then $g(C_{1}) \leq 1$. In particular, the degree of irrationality of a product of two hyperelliptic curves is $4$.
\end{Lthm}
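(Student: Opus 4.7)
The plan is to exploit the hypothesis $q(S) = 0$ via the rational map $\alpha \colon S \dashrightarrow \Sym^{3}(C_{1} \times C_{2})$ sending a general $s \in S$ to the unordered triple $\varphi^{-1}(s)$. Composing with the two projections $\Sym^{3}(C_{1} \times C_{2}) \to \Sym^{3} C_{i}$ and then with Abel-Jacobi yields rational maps $S \dashrightarrow \text{Pic}^{3}(C_{i})$. Since $\Alb(S) = 0$, each such map is constant, so the composition $\alpha_{i} \colon S \dashrightarrow \Sym^{3} C_{i}$ factors through a single complete linear system $|D_{i}| = \P^{r_{i}}$ of degree $3$ on $C_{i}$.

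A dimension count then yields $r_{1} + r_{2} \geq 2$, since the combined map $(\alpha_{1}, \alpha_{2}) \colon S \dashrightarrow |D_{1}| \times |D_{2}|$ has finite generic fibers: a pair of divisors on $(C_{1}, C_{2})$ determines the unordered triple $\varphi^{-1}(s)$ up to the finitely many pairings of the points. If either $r_{i} \geq 2$, then $C_{i}$ carries a complete $g^{2}_{3}$, and applying Clifford's inequality to $K_{C_{i}} - D_{i}$ (together with Riemann-Roch) forces $g(C_{i}) \leq 1$; the hypothesis $g(C_{1}) \leq g(C_{2})$ then yields $g(C_{1}) \leq 1$, finishing this case.

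The main case is $r_{1} = r_{2} = 1$, where both $|D_{i}|$ are complete pencils of degree $3$. I would first argue that $|D_{i}|$ is base-point free: a base point $p \in C_{i}$ would lie in every divisor of $|D_{i}|$, hence in every projection to $C_{i}$ of the fiber $\varphi^{-1}(s)$, so every $s \in S$ would come from a preimage with $i$-th coordinate $p$, forcing $S$ to be covered by the image of $\{p\} \times C_{j}$, contradicting dominance. Thus each $C_{i}$ admits a base-point-free degree-$3$ map $\pi_{i} \colon C_{i} \to \P^{1}$, and matching degrees yields a degree-$3$ rational factorization $\psi \colon S \dashrightarrow \P^{1} \times \P^{1}$ of $\pi_{1} \times \pi_{2}$ through $\varphi$. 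To exploit $p_{g}(S) = 0$, I would pass to the Galois closure of $\varphi$, whose group is $\Z/3$ or $S_{3}$: the Galois closure is then either $C_{1} \times C_{2}$ itself or a double cover thereof. Decomposing $H^{0}(\Omega^{1})$ and $H^{0}(\Omega^{2})$ of the Galois closure into isotypic components, and combining with Chevalley-Weil-type calculations for the eigenvalue multiplicities of the $\Z/3$-actions on $H^{0}(\Omega^{1}_{C_{i}})$, the vanishings $q(S) = p_{g}(S) = 0$ should translate into numerical restrictions that force $g(C_{i}) \leq 1$.

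The hardest part will be making this representation-theoretic analysis uniform across sub-cases, in particular the non-Galois case where the Galois closure of $\varphi$ is an $S_{3}$-cover and where the structure of the branch divisor of the double cover over $C_{1} \times C_{2}$ enters substantively. Granted the inequality $g(C_{1}) \leq 1$, the second statement follows: for hyperelliptic $C_{1}, C_{2}$ of genera $\geq 2$ the main result excludes $\irr(C_{1} \times C_{2}) = 3$; the upper bound $\irr \leq \gon(C_{1})\gon(C_{2}) = 4$ is immediate; and a numerical check comparing $K^{2}$ and $p_{g}$ of $C_{1} \times C_{2}$ against those of a hypothetical double cover of $\P^{2}$ excludes $\irr = 2$, giving $\irr(C_{1} \times C_{2}) = 4$.
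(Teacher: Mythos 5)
Your reduction in the first half is sound and runs parallel to the paper's setup: the composition $S \dashrightarrow \Sym^{3}(C_{i}) \to \Alb(C_{i})$ kills the Albanese, so the fiber projections live in a complete linear system $\lvert D_{i}\rvert$ of degree $3$; if some $r_{i} \geq 2$ then Clifford plus Riemann--Roch gives $g(C_{i}) \leq 1$ and you are done, and your base-point-freeness argument in the case $r_{1}=r_{2}=1$ is correct. The problem is that this leaves precisely the hard case --- both curves carrying base-point-free $g^{1}_{3}$'s (e.g.\ two trigonal curves of large genus, or two genus-$2$ curves) --- and there your argument stops being a proof. The sentence ``the vanishings $q(S)=p_{g}(S)=0$ should translate into numerical restrictions that force $g(C_{i}) \leq 1$'' is the entire content of the remaining step, and it is not carried out. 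Worse, the proposed mechanism does not obviously apply: in the non-Galois case the Galois closure $\widetilde{X} \to S$ is an $S_{3}$-cover whose total space is a \emph{double cover} of $C_{1}\times C_{2}$, not a product of curves, so $H^{0}(\widetilde{X},\Omega^{2})$ does not decompose as a tensor product of spaces of $1$-forms on the $C_{i}$, and Chevalley--Weil data for the $\Z/3$-actions on $H^{0}(\Omega^{1}_{C_{i}})$ does not control $p_{g}(S)$; the branch divisor of $\widetilde{X} \to C_{1}\times C_{2}$ enters in a way you have not analyzed. Even in the cyclic case you would first need to show that the order-$3$ birational automorphism of $C_{1}\times C_{2}$ respects the product decomposition before running the eigenvalue computation. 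So the central case of the theorem is a plan, not an argument.

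The paper closes this case by a completely different and much shorter device: for general $s$, a monodromy/irreducibility argument shows that $\pr_{i}(\varphi^{-1}(s))$ consists of three points no two of which are exchanged by a hyperelliptic involution of $C_{i}$ (such a pair would canonically single out the third point of the fiber, contradicting irreducibility of an incidence variety over $S$). Granting $g(C_{1}),g(C_{2})\geq 2$, one can then choose $1$-forms $\omega_{i}\in H^{0}(C_{i},\Omega^{1})$ so that the decomposable $2$-form $\pr_{1}^{\ast}\omega_{1}\wedge\pr_{2}^{\ast}\omega_{2}$ vanishes at exactly two of the three points of $\varphi^{-1}(s)$; Mumford's trace map then produces a nonzero element of $H^{0}(S,\Omega^{2})$, contradicting $p_{g}(S)=0$. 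No Galois closure or case division on the monodromy group is needed. Separately, your exclusion of $\irr = 2$ via ``comparing $K^{2}$ and $p_{g}$ against a hypothetical double cover of $\P^{2}$'' does not literally make sense for a rational map; the correct (and easy) argument is the paper's Lemma 1.1: the birational involution acts by $-1$ on $H^{1}(C_{1}\times C_{2})$ since $q=0$, hence by $+1$ on $H^{2}$, forcing $H^{2,0}(C_{1}\times C_{2})=0$ and thus $g(C_{1})g(C_{2})=0$.
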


\noindent Theorem~\ref{thm:hyp} should be contrasted with the situation for products of elliptic curves. Yoshihara has provided examples of products of elliptic curves which admit dominant rational maps of degree 3 to $\mathbb{P}^2$. While it is believed that the degree of irrationality of the product of two very general elliptic curves is $4$, this question remains open.

Our main result is that this upper bound for $\irr(C_{1} \times C_{2})$ is achieved for most pairs of curves whose genera are sufficiently large relative to their gonalities.

\begin{Lthm}\label{thm:largeg}
Let $C_1$ and $C_2$ be curves of gonality $a_1$ and $a_2$ with gonal maps $f_1$ and $f_2$ which do not factor, and let $\varphi: C_1\times C_2\dashrightarrow S$ be a dominant rational map of degree $k$ to a surface $S$ with $p_{g}(S) = 0$. Suppose that $g(C_i) > (a_i-1)(\max\{k, a_{i}\}-1)$ for $i = 1,2$. Then
\begin{enumerate}[label=\textup{(\roman*)}]
    \item $\varphi$ factors through one of the maps:
\[ f_1\times \textup{id}_{C_2} \colon C_1\times C_2 \longrightarrow \mathbb{P}^1\times C_2, \qquad \textup{id}_{C_1} \times f_{2} \colon C_1\times C_2 \longrightarrow C_1\times \mathbb{P}^1. \]
\item Moreover, if $q(S)=0$ then $k \geq a_1 a_2$. In particular, $\irr(C_{1} \times C_{2}) = a_{1}a_{2}$.
\end{enumerate}
\end{Lthm}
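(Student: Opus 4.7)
I argue by contradiction: assume $\varphi$ factors through neither $f_1\times\textup{id}_{C_2}$ nor $\textup{id}_{C_1}\times f_2$. For general $p\in C_1$, set $\varphi_p=\varphi|_{\{p\}\times C_2}\colon C_2\to S$ with image $\Sigma_p$ and generic degree-onto-image $e$; define $\varphi_q$, $\Sigma'_q$, and $e'$ analogously. Counting the $k$ points of a general fiber $\varphi^{-1}(s)$ by first or second coordinate yields $e,\,e'\leq k$.

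The key translation is that $\varphi$ factors through $f_1\times\textup{id}_{C_2}$ if and only if $\varphi_q$ factors through $f_1$ for general $q$; hence the non-factoring assumption implies that for general $q$ the map $\varphi_q\colon C_1\to\tilde\Sigma'_q$ does not factor through $f_1$. Since $f_1$ itself does not factor by hypothesis, the pair $(f_1,\varphi_q)$ admits no common intermediate cover, so the Castelnuovo--Severi inequality gives
\[
g(C_1)\;\leq\;a_1\cdot g(\tilde\Sigma'_q)\;+\;(a_1-1)(e'-1).
\]
Since $e'\leq k\leq\max\{k,a_1\}$, the hypothesis $g(C_1)>(a_1-1)(\max\{k,a_1\}-1)$ forces $g(\tilde\Sigma'_q)\geq 1$; symmetrically $g(\tilde\Sigma_p)\geq 1$. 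Thus $S$ is covered by two $1$-parameter families of non-rational curves $\{\Sigma_p\}_{p\in C_1}$ and $\{\Sigma'_q\}_{q\in C_2}$.

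The principal obstacle will then be to derive a contradiction with $p_g(S)=0$ from this configuration. My plan is to desingularize the incidence surfaces $\tilde I\subset C_1\times S$ and $\tilde J\subset C_2\times S$, which fiber over $C_i$ with positive-genus fibers $\tilde\Sigma_p$, $\tilde\Sigma'_q$ and are generically finite over $S$ of degrees $m=k/e$ and $n=k/e'$. Applying the relative canonical formula $\chi(\mathcal{O}_{\tilde I})=(g(C_1)-1)(g(\tilde\Sigma_p)-1)+\deg\pi_{1*}\omega_{\tilde I/C_1}$ (and its analogue for $\tilde J$) together with the trace maps $\pi_{S*}\omega_{\tilde I}\twoheadrightarrow\omega_S$, one should be able to produce a nonzero global section of $\omega_S$, contradicting $p_g(S)=0$. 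In the simplest sub-case $m=n=1$ the two families are pencils, and mapping $s\in S$ to the unique pair $(p,q)$ with $s\in\Sigma_p\cap\Sigma'_q$ defines a birational inverse to $\varphi$; then $p_g(S)=g(C_1)g(C_2)>0$, an immediate contradiction. Handling the general case $m\cdot n>1$ via the trace argument above seems to require more delicate fibration-theoretic input.

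\textbf{Strategy for (ii).} By (i), after possibly swapping factors, we may assume $\varphi=\tilde\varphi\circ(f_1\times\textup{id}_{C_2})$ for some dominant $\tilde\varphi\colon\mathbb{P}^1\times C_2\dashrightarrow S$ of degree $k/a_1$. Define
\[
\tau\colon S\dashrightarrow\Sym^{k/a_1}(C_2),\qquad s\longmapsto\pi_{C_2}(\tilde\varphi^{-1}(s)),
\]
and compose with the Abel--Jacobi morphism to obtain $S\dashrightarrow J(C_2)$. Since $q(S)=0$, this composition factors through $\Alb(S)=0$ and is therefore constant, so $\tau$ takes values in a single complete linear system $|D|\subset\Sym^{k/a_1}(C_2)$ with $\deg D=k/a_1$. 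Moreover $\tau$ cannot be constant: otherwise every fiber $\tilde\varphi^{-1}(s)$ would lie in $\mathbb{P}^1\times\Supp(D)$, so the union $\bigcup_{s\in S}\tilde\varphi^{-1}(s)$ (a dense subset of $\mathbb{P}^1\times C_2$) would be contained in the curve $\mathbb{P}^1\times\Supp(D)$, a contradiction. Hence $\dim|D|\geq 1$, producing a $g^1_{k/a_1}$ on $C_2$ and the bound $k/a_1\geq\gon(C_2)=a_2$, i.e., $k\geq a_1a_2$. Combined with the upper bound $\irr(C_1\times C_2)\leq a_1a_2$ from the product of gonal maps, this establishes $\irr(C_1\times C_2)=a_1a_2$.
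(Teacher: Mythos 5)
Your part (ii) is correct and is essentially the paper's own argument: compose $s\mapsto \pr_{C_2}(\tilde\varphi^{-1}(s))$ with the Abel--Jacobi map, use $\Alb(S)=0$ to conclude the image lies in a single linear system, check that this image is positive-dimensional, and extract a $g^1_{k/a_1}$ on $C_2$. (Minor point: the resulting pencil need not be base-point free, but its moving part still bounds the gonality from above, so nothing is lost.)

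Part (i), however, is only a strategy, and the step you defer is the entire heart of the theorem. What you actually establish is: if $\varphi$ factors through neither $f_1\times\id_{C_2}$ nor $\id_{C_1}\times f_2$, then by Castelnuovo--Severi the surface $S$ is covered by two one-parameter families of curves of positive geometric genus. (The inequality should read $g(C_1)\leq e'\,g(\tilde\Sigma_q')+(a_1-1)(e'-1)$, with coefficient $e'$ rather than $a_1$, but the conclusion $g(\tilde\Sigma_q')\geq 1$ survives.) The problem is that this configuration is not by itself incompatible with $p_g(S)=0$: the plane $\mathbb{P}^2$ is covered by pencils of cubics, and Yoshihara's degree-$3$ maps $E_1\times E_2\dashrightarrow\mathbb{P}^2$ realize exactly the picture you describe --- the theorem excludes them only through the quantitative hypothesis $g(C_i)>(a_i-1)(\max\{k,a_i\}-1)$, which elliptic curves fail. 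So any contradiction must exploit the genus bounds in an essential way, and your sketch (relative canonical formulas plus a trace map $\pi_{S*}\omega_{\tilde I}\twoheadrightarrow\omega_S$ that ``should'' produce a $2$-form) supplies no mechanism for doing so; you acknowledge this when you defer the case $mn>1$. The sub-case $m=n=1$ you do treat forces every point of $\varphi^{-1}(s)$ to share both coordinates, i.e.\ $k=1$, so it is vacuous. By contrast, the paper proves (i) by analyzing the fibers of $\varphi$ directly: it shows the projection of a general fiber to $C_i$ either consists of points in linearly general position in canonical space or is partitioned into gonal divisors (via geometric Riemann--Roch and Castelnuovo--Severi), eliminates the first possibility and most of the second by tracing decomposable $2$-forms $\pr_1^{*}\omega_1\wedge\pr_2^{*}\omega_2$ chosen to vanish at all but one point of the fiber, and disposes of the residual ``checkered'' configuration by showing it would force the gonal map itself to factor. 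Your reduction to covering families of positive-genus curves is a genuinely different and attractive starting point, but as written the proof of (i) is not there.
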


\noindent The technical assumption that the gonal maps $f_{1}$ and $f_{2}$ do not factor is necessary (see Example~\ref{example:EllipticCover}).

\begin{remark}\label{factoremark}
The gonal map of a very general $a$-gonal curve of genus $g>(a-1)^2$ does not factor. Indeed, consider the Hurwitz scheme
\[ \mathcal{H}_{g,a} =\{ [C\longrightarrow \P^1] : g(C)=g, \ \deg \varphi=a \text{ and }\varphi\text{ is simply ramified}\}. \]
A simply ramified map $C \longrightarrow \P^1$ of degree $a$ cannot factor nontrivially, and if $g(C)>(a-1)^2$ then the Castelnuovo-Severi inequality (Lemma~\ref{lem:maps2curves}) implies that there is a unique such map $C \longrightarrow \P^{1}$ of degree $\leq a$. This allows us to define a generically injective morphism $\mathcal{H}_{g,a} \longrightarrow \mathcal{M}_{g}^{\gon \leq a}$, where 
\[ \mathcal{M}_{g}^{\gon \leq a} = \{[C]\in \mathcal{M}_g: \text{gon}(C)\leq a\} \subseteq \mathcal{M}_g \]
and it suffices to observe that $\mathcal{M}_{g}^{\gon \leq a}$ is irreducible \cite{AC81} and of the same dimension as $\mathcal{H}_{g,a}$.
\end{remark}

\subsection*{Outline} In $\S$1 we collect some examples and elementary results regarding dominant rational maps to varieties without 1-forms or 2-forms and present several tools which will be used in the proofs of Theorem \ref{thm:hyp} and Theorem \ref{thm:largeg}. The proofs of these theorems will occupy $\S$2. Finally, in $\S$3 we propose some questions and open problems. Throughout the paper, we work over $\mathbb{C}$.

\subsection*{Acknowledgements}

We would like to thank Rob Lazarsfeld for valuable discussions and for suggesting that one could trace $2$-forms to study low degree rational maps.

\section{Background}

\subsection{Miscellaneous results about rational maps from products of curves}In this subsection we gather some results about rational maps from products of curves which complement Theorems~\ref{thm:hyp} and \ref{thm:largeg}. First, given a product of curves $C_{1} \times \cdots \times C_{n}$, it is well-known (see for instance \cite[Theorem 2]{HM82}) that
\begin{equation}\label{maxbound} \irr(C_{1} \times \cdots \times C_{n}) \ \geq \ \max \{ \gon(C_{1}), \ldots, \gon(C_{n}) \}. \end{equation}

Moreover, products of curves do not have interesting dominant rational maps of degree $2$ to varieties without $1$-forms or $2$-forms.

\begin{lemma}\label{deg2maps}
Consider curves $C_1,\ldots, C_n$ of genera $g(C_1)\leq\cdots\leq g(C_n)$ and suppose that
\[ \varphi: C_1\times \cdots\times C_n \dashrightarrow Y \]
is a dominant rational map of degree 2 to a variety with $h^{2,0}(Y)=h^{1,0}(Y)=0$. Then $g(C_{1})=\cdots=g(C_{n-1})=0$ and $C_n$ is rational or hyperelliptic.
  
\end{lemma}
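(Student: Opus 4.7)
The plan is to analyze the covering involution $\sigma$ of the degree-$2$ cover. Passing to a smooth projective birational model $\widetilde{X}$ of $X = C_1 \times \cdots \times C_n$ on which $\sigma$ is regular, the pullback $\varphi^{\ast}$ identifies each $H^{p,0}(Y)$ with the $\sigma^{\ast}$-invariant subspace of $H^{p,0}(\widetilde{X}) = H^{p,0}(X)$. The hypotheses $h^{1,0}(Y) = h^{2,0}(Y) = 0$ thus say that $\sigma^{\ast}$ acts as $-1$ on both $H^{1,0}(X)$ and $H^{2,0}(X)$.

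For the vanishing of genera, I invoke the K\"unneth decomposition
\[ H^{2,0}(X) \ = \ \bigoplus_{i<j} H^{1,0}(C_i) \otimes H^{1,0}(C_j), \]
whose classes are wedge products $p_{i}^{\ast} \omega_{i} \wedge p_{j}^{\ast} \omega_{j}$ of $1$-forms pulled back from distinct factors. Since $\sigma^{\ast}$ acts by $-1$ on each summand $p_{i}^{\ast} H^{1,0}(C_i)$ of $H^{1,0}(X)$, it acts by $(-1)\cdot(-1) = +1$ on every such class. Combined with the requirement that $\sigma^{\ast}$ act by $-1$ on $H^{2,0}(X)$, this forces $H^{2,0}(X) = 0$, so at most one $C_i$ has positive genus; the ordering of genera then gives $g(C_1) = \cdots = g(C_{n-1}) = 0$.

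To analyze $C_n$, I may assume $g(C_n) \geq 1$, so that $\Alb(\widetilde{X}) = J(C_n)$. Birational invariance of the Albanese lets me descend $\sigma$ to an automorphism $\bar{\sigma}$ of $J(C_n)$ whose differential at the origin is $-1$, so $\bar{\sigma}(t) = -t + c$ for some $c \in J(C_n)$. Writing the Albanese morphism of $\widetilde{X}$ as $\widetilde{X} \dashrightarrow C_n \xrightarrow{\iota} J(C_n)$ (the first map being the projection) and using injectivity of the Abel--Jacobi map $\iota$, I extract from the $\bar{\sigma}$-equivariance of the Albanese a biregular involution $\tau \colon C_n \to C_n$ satisfying $\iota \circ \tau = -\iota + c$. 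The action of $\tau^{\ast}$ on $H^{1,0}(C_n) = H^{1,0}(J(C_n))$ is therefore $-1$, and the quotient $C_n/\tau$ has genus $\dim H^{1,0}(C_n)^{\tau^{\ast}=+1} = 0$. This exhibits $C_n$ as a double cover of $\mathbb{P}^1$, i.e., rational or hyperelliptic.

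The most delicate step is the last: extracting a genuine biregular involution $\tau$ on $C_n$ from an a priori only birational $\sigma$ on $X$. The rigidity of maps to abelian varieties, together with the birational invariance of the Albanese for smooth projective varieties, is what resolves the indeterminacy issues and yields a regular $\tau$.
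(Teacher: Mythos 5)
Your proof is correct, and its first half is essentially the paper's own argument: both exploit that the covering involution acts by $-1$ on $H^{1,0}(C_1\times\cdots\times C_n)$ because $h^{1,0}(Y)=0$, hence by $+1$ on the image of $\bigwedge^2 H^{1,0}$, which by K\"unneth exhausts $H^{2,0}(C_1\times\cdots\times C_n)$; since $h^{2,0}(Y)=0$ forces this invariant space to vanish, at most one factor has positive genus. (If anything your version is tidier, as you work with $H^{1,0}$ and $H^{2,0}$ directly, whereas the paper phrases the argument in terms of the full singular cohomology $H^1$ and $H^2$, where the literal assertion that $H^2$ of the product ``must vanish'' is an overstatement -- only the $(2,0)$ part is meant.) Where you genuinely diverge is the treatment of $C_n$: the paper simply invokes the bound $\irr(C_1\times\cdots\times C_n)\geq \max_i \gon(C_i)$ of \eqref{maxbound}, citing \cite[Theorem 2]{HM82}, to conclude $\gon(C_n)\leq 2$, whereas you reprove the needed special case from scratch by descending the covering involution to $\Alb(\widetilde X)\cong J(C_n)$, observing that it acts as $t\mapsto -t+c$, restricting to the Abel--Jacobi image of $C_n$, and thereby producing a hyperelliptic involution on $C_n$. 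Your route is self-contained and makes the $g^1_2$ geometrically explicit, at the price of the equivariant-resolution and Albanese-functoriality bookkeeping you rightly flag (all of which is standard in characteristic zero); the paper's route is shorter but outsources the endgame to the cited inequality. A shared caveat rather than a gap: both arguments really only establish that $C_n$ admits a degree-$2$ map to $\mathbb{P}^1$, so the genus-one case is not excluded, and the conclusion ``rational or hyperelliptic'' should be read as allowing $C_n$ elliptic given the paper's footnote reserving ``hyperelliptic'' for genus at least $2$.
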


\begin{proof}
The map $\varphi$ gives a birational involution
\[ \iota : C_1\times\cdots\times C_n \dashrightarrow  C_1\times\cdots\times C_n \]
and thus an involution $\iota^*$ of $V =_{\text{def}} H^1(C_1\times\cdots\times C_n,\mathbb{C})$. Since $H^1(Y)=0$, the 1-eigenspace for $\iota^*$ must be trivial and thus the $(-1)$ generalized eigenspace for $\iota^*$ is all of $V$. Accordingly, $\iota^*$ acts on $\mybigwedge^2 V$ with eigenvalue $1$. By the K\"unneth formula, we have the following surjective map which commutes with $\iota^*$:
\[
\mybigwedge^2 V \longrightarrow H^2(C_1\times \cdots \times C_n,\mathbb{C}).
\]
This ensures that $1$ is the only eigenvalue of $\iota^*$ on $H^2(C_1\times \cdots \times C_n,\mathbb{C})$. 
Since $H^{2,0}(Y) = 0$, the space $H^2(C_1\times \cdots \times C_n,\mathbb{C})$ must vanish. It follows that $g(C_1)=\cdots=g(C_{n-1})=0$ and the result is then a consequence of \eqref{maxbound}.

\end{proof}

The following example provides some instances in which the inequality
\[ \irr(C_1\times C_2) \ \leq \  \gon(C_1)\cdot \gon(C_2) \]
is strict, and highlights the importance of the non-factoring assumptions on the gonal maps of $C_1$ and $C_2$ in Theorem \ref{thm:largeg}.
\begin{example}[Product of curves with low degree of irrationality]\label{example:EllipticCover}
Yoshihara \cite{Yoshihara96} has provided examples of products of elliptic curves $E_1\times E_2$ with degree of irrationality $3$. Hence, a product of curves can have degree of irrationality strictly less than the product of the gonality of the factors. For examples involving curves of higher genus, one can use this example in conjunction with the results of Kato-Martens \cite{KM14} and Keem-Martens \cite{KM15}. They give covers $C_{i}
\longrightarrow E_{i}$ of degree $d_{i}$, genus $g(C_{i}) \gg d_{i}$, and gonality $2d_i$ for $i = 1, 2$. Then
\[ \irr(C_1\times C_2) \ \leq \ 3d_1d_2 \ < \ 4d_1d_2 \ = \ \gon(C_1) \cdot \gon(C_2). \]
\end{example}

\subsection{Tools used in the proofs of Theorems~\ref{thm:hyp} and \ref{thm:largeg}}

A key tool we will use to prove part (1) of Theorem \ref{thm:largeg} is the following lemma which states that an appropriate partition of the fibers of a rational map gives rise to a factorization.

\begin{lemma}\label{facto}
Consider generically finite dominant rational maps $f \colon X \dashrightarrow Y$ and $g \colon X \dashrightarrow W$. Suppose that for a general element $y \in Y$, the fiber $f^{-1}(y)$ decomposes into a disjoint union
\[ f^{-1}(y) \ = \ \coprod_{i} g^{-1}(w_{i}) \]
for some finite collection of points $w_{i} \in W$. Then there exists a generically finite rational map $h \colon W \dashrightarrow Y$ making the following diagram commute:
\[\begin{tikzcd}
X \ar[r,dashed,"f"]\ar[d,swap,dashed,"g"]& Y\\
W  \ar[ur,swap,dashed,"h"].&
\end{tikzcd}\]
\end{lemma}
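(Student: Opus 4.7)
My plan is to define $h$ pointwise by sending a general $w \in W$ to the common value of $f$ on the fiber $g^{-1}(w)$, after first verifying that $f$ really is constant on such a fiber. This constancy is essentially the content of the hypothesis: for a general $x \in X$, the value $y = f(x)$ is general in $Y$, and by assumption $f^{-1}(y) = \coprod_i g^{-1}(w_i)$; since $x$ lies in one of the $g^{-1}(w_i)$, that $g^{-1}(w_i)$ equals $g^{-1}(g(x))$ and is contained in $f^{-1}(y) = f^{-1}(f(x))$. So $f$ is identically equal to $f(x)$ on all of $g^{-1}(g(x))$.

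To promote this to an honest rational map of varieties I would work with the graph $Z \subseteq Y \times W$ defined as the closure of the image of $(f,g) \colon X \dashrightarrow Y \times W$. The first projection $\pi_Y \colon Z \to Y$ is dominant with general fiber equal to the finite set $\{(y, w_i)\}_i$ coming from the hypothesis; in particular $\dim Z = \dim Y = \dim W$, where the last equality uses that $f$ and $g$ are generically finite and dominant. The second projection $\pi_W \colon Z \to W$ is also dominant, and the constancy observation above translates into the statement that a general fiber of $\pi_W$ is a single point: every element of $\pi_W^{-1}(w)$ has the form $(f(x), w)$ with $x \in g^{-1}(w)$, and the first coordinate is determined by $w$ alone. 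Hence $\pi_W$ is birational, and I set $h := \pi_Y \circ \pi_W^{-1}$.

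The identity $f = h \circ g$ then holds by construction of $Z$ as the image of $(f,g)$, and $h$ is generically finite since $f$ is and $f = h \circ g$. I do not foresee a genuine obstacle — the argument is essentially formal, and the only care required is the bookkeeping of open dense subsets: one must intersect the domains of $f$ and $g$, the open over which the hypothesis's decomposition holds, and the loci of well-behaved fibers for $\pi_Y$ and $\pi_W$, in order for all the ``general point'' statements to compose cleanly. This is a routine verification.
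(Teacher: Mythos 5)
Your proposal is correct and follows essentially the same route as the paper: both arguments pass to suitable dense open subsets, form the incidence/graph variety in $W \times Y$ (your $Z$, the closure of the image of $(f,g)$, agrees generically with the paper's $I = \{(w,y) : g^{-1}(w) \subseteq f^{-1}(y)\}$), observe that the hypothesis forces the projection to $W$ to be birational, and define $h$ as the inverse of that projection followed by the projection to $Y$. The constancy of $f$ on general fibers of $g$ that you spell out is exactly the content the paper packages into the claim that $I \to W$ is an isomorphism, so no further comparison is needed.
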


\begin{proof}
After replacing $X, Y, W$ with open subsets, we may assume that both $f$ and $g$ are \'etale morphisms and that the hypothesis holds for all $y \in Y$. Consider the incidence variety
\[ I \ =_{\textup{def}} \ \{ (w, y) \in W \times Y \colon g^{-1}(w) \subseteq f^{-1}(y) \}. \]
Note that this is an algebraic subvariety of $W \times Y$. The projection of $I$ to $W$ is an isomorphism and $I$ dominates $Y$. Therefore we may define $h$ as the composition of this isomorphism with the second projection 
\[h: W\cong I\xrightarrow{ \ \textup{pr}_2 \ } Y\]
and one easily verifies that $f=h\circ g$.
\end{proof}

The following lemma will be used frequently in order to rule out the existence of a $g^1_{k}$ which does not factor through the gonal map of a curve $C$, when $k$ is small relative to $g(C)$.

\begin{lemma}[Castelnuovo-Severi inequality {\cite[VIII, Ex. C-1]{ACGH85}}]\label{lem:maps2curves}
Let $X, C_{1}, C_{2}$ be curves of respective genera $g,g_{1}, g_{2}$. Assume that $f_{i}: X \longrightarrow C_{i}$ is a degree $a_{i}$ cover (for $i = 1, 2$) such that the morphism $f_{1} \times f_{2} \colon X \longrightarrow C_{1} \times C_{2}$ is birational onto its image. Then
\[ g \leq a_{1} g_{1} + a_{2} g_{2} + (a_{1}-1)(a_{2}-1). \]
\end{lemma}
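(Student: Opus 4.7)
The plan is to prove the inequality by analyzing the image of $(f_1, f_2) \colon X \to C_1 \times C_2$ as a curve on the smooth surface $S = C_1 \times C_2$ and applying the adjunction formula together with the standard inequality between geometric and arithmetic genus.

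First, I would set $Y = (f_1 \times f_2)(X) \subseteq S$, which by hypothesis is a (possibly singular) irreducible curve whose normalization is dominated by $X$; in fact the hypothesis that $f_1 \times f_2$ is birational onto its image means $X$ is precisely the normalization of $Y$. Consequently one has the comparison $g = g(X) \leq p_a(Y)$, and the problem reduces to bounding $p_a(Y)$ from above by $a_1 g_1 + a_2 g_2 + (a_1-1)(a_2-1)$.

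Next I would determine the numerical class of $Y$ on $S$. Writing $F_1 = \{\textup{pt}\} \times C_2$ and $F_2 = C_1 \times \{\textup{pt}\}$ for the two ruling classes, we have $F_1^2 = F_2^2 = 0$ and $F_1 \cdot F_2 = 1$. Since $\pi_1|_Y \colon Y \to C_1$ has degree $a_1$ (because the general fiber $\{p\} \times C_2$ meets $Y$ in the image of $f_1^{-1}(p)$, which contains $a_1$ points generically) and symmetrically $\pi_2|_Y$ has degree $a_2$, intersecting with $F_1$ and $F_2$ forces $Y \equiv a_2 F_1 + a_1 F_2$ numerically. Similarly, since $K_S = \pi_1^* K_{C_1} + \pi_2^* K_{C_2}$, the canonical class satisfies $K_S \equiv (2g_1-2)F_1 + (2g_2-2)F_2$.

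Finally, I would apply adjunction on the smooth surface $S$ to compute
\[ 2 p_a(Y) - 2 \ = \ Y^2 + Y \cdot K_S \ = \ 2a_1 a_2 + a_2(2g_2 - 2) + a_1(2g_1 - 2), \]
so that $p_a(Y) = a_1 g_1 + a_2 g_2 + (a_1-1)(a_2-1)$. Combining with $g \leq p_a(Y)$ yields the desired inequality. The only substantive step in this plan is verifying that the class of $Y$ is indeed $a_2 F_1 + a_1 F_2$ and that $X$ dominates the normalization of $Y$; both follow quickly from the birationality hypothesis on $f_1 \times f_2$, so I do not anticipate a serious obstacle.
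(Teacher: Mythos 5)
Your argument is correct: the class computation $Y\equiv a_2F_1+a_1F_2$, the adjunction formula giving $p_a(Y)=a_1g_1+a_2g_2+(a_1-1)(a_2-1)$, and the comparison $g\leq p_a(Y)$ via the birationality hypothesis all check out. The paper does not prove this lemma but cites it to \cite[VIII, Ex.\ C-1]{ACGH85}; your proof is the standard adjunction-on-$C_1\times C_2$ argument, which is essentially the one that reference intends, so there is nothing substantive to compare.
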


This can be used to prove the nonexistence of certain dominant rational maps whose degrees are coprime to the gonality of one of the curves:

\begin{proposition}
Let $C_{1}, C_{2}$ be curves of positive genus such that $\gon(C_{1}) = d$ and $g(C_{1}) > (d-1)(e-1)$, and let $e \geq 2$ be a positive integer which is relatively prime to $d$. Then there are no dominant rational maps of degree $e$ from $C_{1} \times C_{2}$ to a surface $S$ with $q(S) = 0$.
\end{proposition}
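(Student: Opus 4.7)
The plan is a proof by contradiction: assuming $\varphi \colon C_1 \times C_2 \dashrightarrow S$ is dominant rational of degree $e$, I would construct a base-point-free pencil $g^1_e$ on $C_1$ whose existence is incompatible with $g(C_1) > (d-1)(e-1)$ and $\gcd(d,e) = 1$ by the Castelnuovo--Severi inequality (Lemma~\ref{lem:maps2curves}). For a general $s \in S$, the fiber $\varphi^{-1}(s)$ consists of $e$ points in $C_1 \times C_2$, and its image under the first projection $\pi_1$ is an effective divisor $D_s$ of degree $e$ on $C_1$; this defines a rational map $\sigma \colon S \dashrightarrow C_1^{(e)}$.

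The first step is to show that all the divisors $D_s$ are linearly equivalent on $C_1$. Composing $\sigma$ with the Abel--Jacobi map $C_1^{(e)} \longrightarrow \textup{Pic}^e(C_1)$ and passing to a smooth projective birational model of $S$, we obtain a rational map to a torsor for the abelian variety $J(C_1)$. Such a map automatically extends to a morphism and factors through the Albanese of the model, which is trivial since $q(S)=0$. Hence the composition is constant, and the image of $\sigma$ lies inside a single complete linear system $|L|$ of degree $e$ on $C_1$.

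Next I would verify that the sub-linear system $V \subset |L|$ spanned by $\{D_s\}_{s \in S}$ is base-point-free and positive-dimensional. A common base point $p \in C_1$ would require $\varphi(\{p\} \times C_2) = S$, which is impossible for dimension reasons; likewise, the $D_s$ cannot be independent of $s$, for otherwise the fibers of $\varphi$ would all project into a finite subset of $C_1$, contradicting that they fill out $C_1 \times C_2$. Thus $V$ is a base-point-free $g^{r}_e$ with $r \geq 1$, giving rise (via a generic linear projection if $r \geq 2$) to a morphism $h \colon C_1 \longrightarrow \P^1$ of degree exactly $e$.

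Finally, I would pair $h$ with the gonal map $f \colon C_1 \longrightarrow \P^1$ to obtain $(f, h) \colon C_1 \longrightarrow \P^1 \times \P^1$. If $(f, h)$ fails to be birational onto its image, it factors through some $C_1 \longrightarrow D$ of degree $k \geq 2$, forcing $k \mid d$ and $k \mid e$ and contradicting $\gcd(d, e) = 1$. So $(f, h)$ is birational onto its image, and Lemma~\ref{lem:maps2curves} yields $g(C_1) \leq (d-1)(e-1)$, contradicting the hypothesis. The main technical point is the verification that $V$ is base-point-free and positive-dimensional so that the resulting pencil has degree exactly $e$ and not a proper divisor thereof; once that is in hand, the coprimality hypothesis reduces the Castelnuovo--Severi step to essentially bookkeeping.
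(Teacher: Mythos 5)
Your proposal is correct and follows essentially the same route as the paper: project the fibers of $\varphi$ to $C_1$ to get a map $S \dashrightarrow \Sym^{e}(C_1)$, use $q(S)=0$ (via the Albanese) to land in a single base-point-free $g^r_e$, extract a base-point-free $g^1_e$ from a general pencil, and apply the Castelnuovo--Severi inequality together with coprimality of $d$ and $e$ to reach a contradiction. The details you supply (base-point-freeness, positive-dimensionality, and the birationality dichotomy for $(f,h)$) are exactly the steps the paper leaves implicit.
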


\begin{proof}
Consider a dominant rational map $\varphi: C_1\times C_2\dashrightarrow S$ of degree $e$, where $S$ is a smooth projective surface with $q(S)=0$. The rational map $\varphi$ gives rise to a rational map
\[ \def\arraystretch{1.1}
\begin{array}{c l c l c}
    S & \dashrightarrow & \Sym^{e}(C_{1} \times C_{2}) & \longrightarrow & \Sym^{e}(C_{1}) \\
    t & \longmapsto & \varphi^{-1}(t) & \longmapsto & \pr_{C_{1}}(\varphi^{-1}(t)) 
\end{array}
\]
whose image is contained in a basepoint-free $g^r_e$ for some positive integer $r$. A general line in this basepoint-free $g^r_e$ gives a basepoint-free $g^1_e$ on $C_1$. Lemma \ref{lem:maps2curves} then implies that this $g^1_e$ and a gonal map for $C_1$ factor non-trivially through the same curve, so $\gcd(d,e)\neq 1$.
\end{proof}

\begin{remark}
Given a projective variety $X$ of dimension $n$, one can consider the threshold
\[ \delta_0(X) \ =_{\textup{def}} \ \min \{ \delta_{0} > 0 \mid \forall \delta \geq \delta_0, \ \exists \text{ dominant rational map } X \dashrightarrow \mathbb{P}^n \text{ of degree } \delta \}. \]
It is straightforward to show that $\delta_{0}(X)$ exists. The previous proposition illustrates that there is no upper bound on $\delta_0(C_1\times C_2)$ which depends only on the gonality of $C_1$ and the gonality of $C_2$.
\end{remark}

\begin{lemma}\label{lem:pq}
Let $m$ be a positive integer and $C$ be a curve of gonality $a > 2$ and genus \[ g(C) > (ma-1)(a-1). \]
Assume that $C$ has a gonal map which does not factor. Given elements $D_1,\ldots, D_m$ of the (unique) $g^1_{a}$ on $C$ and $p,q\in \Supp(D_m)$, there exists a 1-form on $C$ vanishing at all of the points of $\bigcup_{i=1}^m\Supp(D_i)$ except for $p$ and $q$.
\end{lemma}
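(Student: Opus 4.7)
The plan is to obtain the desired 1-form as a general element of the linear system $|K_C - E|$, where $E := \sum_{i=1}^m D_i - p - q$ is an effective divisor of degree $ma - 2$ (assuming $p \neq q$; the case $p = q$ is handled by the same argument with $E := \sum_i D_i - 2p$). Provided $|K_C - E|$ is nonempty and has neither $p$ nor $q$ as a base point, a general $\omega \in H^0(\O_C(K_C - E))$ vanishes on $\bigcup_i \Supp(D_i) \setminus \{p, q\}$ but not at $p$ or $q$. Nonemptiness is a direct Riemann--Roch computation: $h^0(K_C - E) = h^0(E) + g - ma + 1 \geq g - ma + 1$, which is positive under the hypothesis $g > (ma-1)(a-1)$.

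The heart of the argument is to rule out $p$ (and by symmetry $q$) as a base point of $|K_C - E|$. Assuming $p$ is a base point, Serre duality together with the short exact sequence
\[ 0 \ \to \ \O_C(E) \ \to \ \O_C(E + p) \ \to \ \mathbb{C}_p \ \to \ 0 \]
translates this into $h^0(\O_C(E + p)) = h^0(\O_C(E)) + 1$, producing a rational function $f \in H^0(\O_C(E+p)) \setminus H^0(\O_C(E))$. Such an $f$ is forced to have pole order exactly $\text{mult}_p(E+p)$ at $p$, with poles bounded by $E + p$ elsewhere, so in particular $\deg f \leq ma - 1$. I then apply the Castelnuovo--Severi inequality (Lemma~\ref{lem:maps2curves}) to the pair $(\pi, f)$, where $\pi$ is the gonal map: if $\pi \times f \colon C \to \mathbb{P}^1 \times \mathbb{P}^1$ is birational onto its image, then $g \leq (a-1)(\deg f - 1) \leq (a-1)(ma - 2) < (ma-1)(a-1) < g$, a contradiction.

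The main obstacle is the remaining case, where $\pi \times f$ fails to be birational onto its image. Then $\pi$ and $f$ factor through a common intermediate curve, and the non-factoring hypothesis on the gonal map forces $f = g_2 \circ \pi$ for some rational function $g_2$ on $\mathbb{P}^1$. The key observation here is that $\pi(p) = \pi(q) =: t_m$, since $p, q \in \Supp(D_m) = \pi^{-1}(t_m)$. Setting $n := \#\{i : t_i = t_m\}$ and writing $r_x$ for the ramification index of $\pi$ at $x$, the prescribed pole $\text{mult}_p(E+p) = n r_p$ of $f$ at $p$ forces $g_2$ to have a pole of order $n$ at $t_m$. This in turn forces $f$ to have pole of order $n r_q = \text{mult}_q(\sum_i D_i)$ at $q$, which exceeds the bound $\text{mult}_q(E + p) = \text{mult}_q(\sum_i D_i) - 1$ imposed by $f \in H^0(\O_C(E+p))$. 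This contradiction shows $p$ is not a base point, and the non-factoring hypothesis on $\pi$ is precisely what is needed to close the argument.
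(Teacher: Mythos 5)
Your proof is correct, and it reaches the conclusion by a genuinely different route than the paper, although both arguments run on the same engine (Castelnuovo--Severi plus the non-factoring hypothesis, which together force any pencil of degree $\leq ma$ to factor through the gonal map $\pi$). The paper argues globally: it shows that the addition map $\abs{L}^{m} \to \abs{mL}$ is surjective, deduces $h^{0}(mL) = m+1$ and $h^{0}(mL) - h^{0}(mL - p - q) \leq 1$, and concludes by Serre duality and Riemann--Roch. You argue locally, by contradiction at the level of base points of $\abs{K_C - E}$: a base point at $p$ yields a nonconstant $f \in H^{0}(\O_C(E+p)) \setminus H^{0}(\O_C(E))$ of degree $\leq ma-1$ with prescribed pole order at $p$; Castelnuovo--Severi rules out $\pi \times f$ being birational onto its image, non-factoring then forces $f = g_{2} \circ \pi$, and the pole-order bookkeeping at $p$ and $q$ --- which uses precisely that $p$ and $q$ lie in the \emph{same} gonal fiber --- gives the contradiction. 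What each buys: the paper's route yields the stronger structural fact that $\abs{mL}$ is swept out by sums of gonal fibers, which is reusable; your route isolates exactly why two exceptional points are allowed when they lie in a common fiber, and it certifies base-point-freeness at $p$ and at $q$ separately, which is really what the conclusion demands (the single inequality $h^{0}(K_C - (mL-p-q)) - h^{0}(K_C - mL) \geq 1$ by itself only produces a form failing to vanish at $p$ \emph{or} at $q$; the extra input that a divisor of $\abs{mL}$ through $p$ contains the whole fiber, hence $q$, is what upgrades this to both, and your pole-order argument encodes exactly that). One caveat, shared equally with the paper's proof: both arguments tacitly assume $p$ and $q$ appear with multiplicity one in $\sum_i D_i$ (otherwise $p,q \in \Supp(E)$ and every section of $\abs{K_C - E}$ vanishes there); this holds in the only situation where the lemma is applied, namely distinct reduced fibers, but is worth flagging if the lemma is to be read as a standalone statement.
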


\begin{proof}
By our hypothesis, the gonal map $C \longrightarrow \P^{1}$ of degree $a$ does not factor so Lemma~\ref{lem:maps2curves} together with the bound $g(C) > (ma-1)(a-1)$ implies that (1) there is a unique line bundle $L$ of degree $a$ such that $\abs{L}$ is a base-point free $g^{1}_{a}$, and (2) any base-point free pencil of degree $\leq ma$ must factor through the gonal map. Since $\abs{mL}$ is base-point free, given any element $D \in \abs{mL}$ we can find a base-point free pencil $\mathfrak{d}$ passing through $D$. By (2), $\mathfrak{d}$ factors as
\begin{center}
\begin{tikzcd}
C \arrow[rr, bend right, swap, "\d"] \arrow[r, "\abs{L}"] & \P^{1} \arrow[r, "\alpha"] & \P^{1}
\end{tikzcd}
\end{center}
where $\alpha$ has degree $a$. Since $D$ was arbitrary, this implies that the addition map
\[ \abs{L}^{m} \longrightarrow \abs{mL} \]
is surjective. Because this map is finite onto its image $\dim \abs{mL} = m$ and $\dim \abs{mL} - \dim \abs{(m-1)L} = 1$.

With $D_{1}, \ldots, D_{m}$ in the $g^{1}_{a}$ and $p, q \in \Supp(D_{m})$ as above, we need to show that
\[ h^{0}(K_{C} - (mL - p - q)) - h^{0}(K_{C}-mL) \geq 1. \]
By Serre duality and Riemann-Roch, it suffices to show that $h^{0}(mL) - h^{0}(mL - p - q) \leq 1$. This immediately follows from the fact that $\abs{mL} - \abs{(m-1)L} = 1$ and
\[ (m-1)L \leq mL - p - q. \qedhere \]
\end{proof}

\section{Proofs of Theorems \ref{thm:hyp} and \ref{thm:largeg}}

\subsection{Proof of Theorem~\ref{thm:hyp}}

By Lemma~\ref{deg2maps}, we know that any dominant rational map
\[ \varphi \colon C_{1} \times C_{2} \dashrightarrow S \]
from a product of curves with $g(C_{1}), g(C_{2}) \geq 1$ to a surface $S$ satisfying $p_{g}(S) = q(S) = 0$ must have degree at least $3$. Suppose for contradiction that $g(C_{1}), g(C_{2}) \geq 2$ and that there exists such a map $\varphi$ with $\deg(\varphi) = 3$. The induced map
\[ F_{\varphi} \colon S \dashrightarrow \Sym^{3}(C_{1} \times C_{2}) \longrightarrow \Sym^{3}(C_{i}) \longrightarrow \Alb(C_{i}) \]
factor through $\Alb(S) = 0$. Hence, the image of $S$ in $\Sym^{3}(C_{i})$ is a surface or a rational curve in a fiber of the Albanese map, which is a linear system $\abs{L_{i}}$ for some line bundle $L_{i}$ of degree 3.

Fix a general $s \in S$ and consider $\varphi^{-1}(s) = \{ p_{1}, p_{2}, p_{3} \}$. By Lemma \ref{lem:deg}, \[ \pr_{1}(\varphi^{-1}(s)) \subseteq C_{1} \qquad \text{and} \qquad \pr_{2}(\varphi^{-1}(s)) \subseteq C_{2} \]
both consist of three distinct points. For each $i$, $\pr_{i}(\varphi^{-1}(s))$ cannot contain a pair of points which are images of one another under the hyperelliptic involution. Indeed, the hyperelliptic pencil is unique and such a pair would single out the third point, thereby violating the irreducibility of $C_1\times C_2$.  Assuming $g(C_{1}), g(C_{2}) \geq 2$, we may therefore choose a 1-form $\omega_{1}$ on $C_{1}$ which vanishes at $\pr_{1}(p_{1})$ but is nonzero at $\pr_{1}(p_{2})$ and $\pr_{1}(p_{3})$. Similarly, we may choose a 1-form $\omega_{2}$ on $C_{2}$ which vanishes at $\pr_{2}(p_{2})$ but is nonzero at $\pr_{2}(p_{1})$ and $\pr_{2}(p_{3})$. The 2-form $\pr_{1}^{\ast}\omega_{1} \wedge \pr_{2}^{\ast}\omega_{2}$ then vanishes at $p_{1}$ and $p_{2}$ but not $p_{3}$. By applying Mumford's trace map \cite{Mumford69} (see \cite{BDELU17} for another treatment), we see that
\[ 0 \not= \Tr_{\varphi}(\pr_{1}^{\ast}\omega_{1} \wedge \pr_{2}^{\ast}\omega_{2}) \in H^{0}(S, \Omega^{2}), \]
which contradicts $p_{g}(S) = 0$.

\subsection{Proof of Theorem~\ref{thm:largeg}}

Let us first show how $(ii)$ follows from $(i)$. Suppose that $p_{g}(S)=q(S)=0$. It suffices to show that any dominant rational map $\psi: C_i\times \mathbb{P}^1\dashrightarrow S$ has degree at least $a_i$ for $i=1,2$. Given such a $\psi$, consider the rational map
\begin{align*}F_\psi:  \ S \ &\dashrightarrow  \ \Sym^{\deg\psi}(C_i\times \mathbb{P}^1)\\
t \ &\longmapsto\ \ \ \ \ \ \  \psi^{-1}(t).\end{align*}
Since the composition
\[ S\dashrightarrow \Sym^{\deg\psi}(C_i\times \mathbb{P}^1)\longrightarrow \Sym^{\deg\psi}(C_i)\longrightarrow \text{Alb}(C_i) \]
factors through $\text{Alb}(S)=0$, the positive-dimensional image of $S$
in $\Sym^{\deg\psi}(C_i)$ is contracted by the Albanese map. It follows that $\deg \psi\geq a_i$.

It remains to prove $(i)$. The assumption that the gonal maps do not factor implies by Lemma~\ref{lem:maps2curves} that each curve $C_{i}$ has a unique gonal map of degree $a_{i}$ for $i = 1, 2$. Let $\abs{L_i}=|f_i^*\mathcal{O}_{\mathbb{P}^1}(1)|$ be the corresponding linear system on $C_i$. Recall that $\varphi \colon C_{1} \times C_{2} \dashrightarrow S$ is a dominant rational map.

\begin{lemma}\label{lem:deg}
For a general $s\in S$, the restrictions of the projection maps $\textup{pr}_i: C_1\times C_2\longrightarrow C_i$ to $\varphi^{-1}(s)$ are coverings of their images. In other words, the number
\[ d_{i,x} =_{\textup{def}} \#\{z\in \varphi^{-1}(s): \textup{pr}_i(z)=\textup{pr}_i(x)\} \]
is independent of $x\in \varphi^{-1}(s)$ for a general $s \in S$.
\end{lemma}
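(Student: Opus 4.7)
The plan is to recast the integers $d_{i,x}$ as cardinalities of fibers of an auxiliary generically finite morphism, and then invoke a standard genericity argument on $S$.

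First I would form the combined morphism
\[ \psi_i \ = \ (\varphi, \pr_i) \colon C_1 \times C_2 \dashrightarrow S \times C_i, \]
and set $W_i = \overline{\psi_i(C_1 \times C_2)} \subseteq S \times C_i$. Since $\varphi$ is dominant, the composition of $\psi_i$ with the first projection $S \times C_i \longrightarrow S$ is dominant as well, so $\dim W_i = 2$ and $\psi_i$ is generically finite onto $W_i$; let $e_i$ denote its generic degree.

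The key identification is that for any $s \in S$ and $x \in \varphi^{-1}(s)$,
\[ \psi_i^{-1}\bigl(s,\pr_i(x)\bigr) \ = \ \{ z \in C_1 \times C_2 : \varphi(z) = s \text{ and } \pr_i(z) = \pr_i(x) \}, \]
and the right-hand side is precisely the set whose cardinality is $d_{i,x}$. Thus $d_{i,x}$ equals the cardinality of the $\psi_i$-fiber over $(s, \pr_i(x)) \in W_i$; in particular it equals $e_i$ whenever $(s, \pr_i(x))$ lies in the open locus $U \subseteq W_i$ on which $\psi_i$ has fiber cardinality exactly $e_i$.

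To conclude, I would restrict $s$ to the intersection of two open dense subsets of $S$: the locus on which $\varphi^{-1}(s)$ consists of $k$ distinct reduced points at which $\varphi$ is defined, and the complement $S \setminus \pr_S(W_i \setminus U)$. This second set is open and dense because $W_i \setminus U$ is a proper closed subset of the surface $W_i$, hence has dimension at most $1$, and therefore projects to a proper closed subset of the surface $S$. For such an $s$, every point $(s, \pr_i(x))$ with $x \in \varphi^{-1}(s)$ lies in $U$, so $d_{i,x} = e_i$ is independent of $x$, as required.

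The main technical point is the dimension count showing that $\pr_S(W_i \setminus U)$ is proper in $S$; this is the only place where the two-dimensionality of $S$ enters. Otherwise the argument is essentially a reformulation once $\psi_i$ is introduced, and I do not foresee any substantive obstacle. A more conceptual alternative would be to invoke monodromy: $\pi_1$ of a suitable open in $S$ acts transitively on a general fiber of $\varphi$ (by irreducibility of $C_1 \times C_2$), and the equivalence relation ``$\pr_i(z) = \pr_i(z')$'' is monodromy-invariant because $\pr_i$ is globally defined, forcing all equivalence classes in $\varphi^{-1}(s)$ to have the same size.
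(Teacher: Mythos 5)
Your argument is correct, and it takes a mildly different route from the paper's. The paper works entirely on the source: it stratifies an open set $U \subseteq C_1 \times C_2$ on which $\varphi$ is \'etale into the constructible pieces $U_r^i = \{x : d_{i,x} = r\}$, uses irreducibility of $U$ to find the dense stratum $U_{r_0}^i$, and then discards $\varphi^{-1}(\varphi(Z))$ where $Z$ is the complement, so that a general fiber lies entirely in $U_{r_0}^i$. You instead package $d_{i,x}$ as the fiber cardinality of the auxiliary map $\psi_i = (\varphi, \pr_i) \colon C_1 \times C_2 \dashrightarrow S \times C_i$ and invoke the standard fact that a generically finite dominant map has constant fiber cardinality over a dense open subset of its image, then push the bad locus down to $S$ by a dimension count. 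The two proofs are the same genericity argument in different clothing, and each step of yours is sound: the only points worth tightening are that $\dim W_i = 2$ needs both the lower bound (from dominance of $\varphi$) and the upper bound $\dim W_i \le \dim(C_1\times C_2) = 2$ before you can call $\psi_i$ generically finite, and that the locus of constant fiber cardinality should be taken as a dense open contained in the relevant constructible set rather than asserted to be open outright. Your closing monodromy sketch is also valid --- transitivity of the monodromy action on a general fiber together with invariance of the relation $\pr_i(z)=\pr_i(z')$ forces all blocks to have equal size --- and is in fact the irreducibility principle the paper reuses repeatedly (e.g.\ in the proof of Lemma~\ref{lem:cases}) stated in group-theoretic terms.
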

\begin{proof}
Let $U\subseteq C_1\times C_2$ be an open subset on which $\varphi$ restricts to an \'etale morphism. Fix $i \in \{ 1, 2 \}$. For each $r\in \mathbb{Z}_{>0}$, consider the quasi-projective subvariety
\[ U_{r}^{i}=_{\textup{def}}\{x\in U: d_{i,x}=r\}\subseteq U. \]
Since $U$ is irreducible and $\bigcup_{r > 0} U_{r}^{i}=U$, there is an integer $r_0$ such that $U_{r_0}^{i}$ is open in $U$. If $Z\subseteq U$ is the complement of $U_{r_0}$ then $\varphi^{-1}(\varphi(Z))\subseteq U$ is a proper closed subset of $U$. Given any $s\in U_{r_0}\setminus \varphi^{-1}(\varphi(Z))$ and any $x\in \varphi^{-1}(x)$, we have $d_{i,x}=r_0$.
\end{proof}

Throughout, we will let $d_{1}$ be the value of $d_{1, x}$ for $x \in \varphi^{1}(s)$ and for a general $s \in S$. There are several cases to consider:

\begin{lemma}\label{lem:cases}
Under the assumptions of Theorem \ref{thm:largeg}, for a general $s \in S$ either:
\begin{enumerate}[label=\textup{(\arabic*)}]
    \item $\textup{pr}_1(\varphi^{-1}(s))$ spans a linear subspace of dimension $k/d_1-1=\# \textup{pr}_1(\varphi^{-1}(s))-1$ in canonical space, or
     \item $\textup{pr}_1(\varphi^{-1}(s))$ is uniquely partitioned into elements of $|L_1|$.
\end{enumerate}
Moreover, case \textup{(2)} breaks up into two subcases. For each element $D$ of $|L_1|$ which is contained in $\textup{pr}_1(\varphi^{-1}(s))$, the image of $\textup{pr}_1|_{\varphi^{-1}(s)}^{-1}(D)$ under the projection to $C_2$ either
\begin{enumerate}
    \item[\textup{(2a)}] does not contain an element of $|L_2|$, or
    \item[\textup{(2b)}] is a union of elements of $|L_2|$.
\end{enumerate}
\end{lemma}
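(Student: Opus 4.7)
The plan is to combine the Castelnuovo-Severi inequality (Lemma \ref{lem:maps2curves}) with a base-locus analysis of positive-dimensional linear systems on $C_1$ and $C_2$.

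For a general $s \in S$, set $T = \textup{pr}_1(\varphi^{-1}(s))$, which by Lemma \ref{lem:deg} is a reduced divisor of degree $k/d_1$ on $C_1$. By the geometric Riemann-Roch theorem, case (1) holds precisely when $h^0(\O_{C_1}(T)) = 1$, so if (1) fails then $|T|$ is a positive-dimensional linear system. Removing its base locus yields a base-point-free pencil of degree $m \leq k/d_1 \leq k$; by Castelnuovo-Severi combined with $g(C_1) > (a_1-1)(\max\{k, a_1\}-1) \geq (a_1-1)(m-1)$, this pencil must factor through a common cover with the gonal map $f_1$, and since $f_1$ does not factor, the pencil itself factors through $f_1$. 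Hence every divisor in the moving part of $|T|$ is a union of $|L_1|$-divisors. I would then upgrade this to the statement that $T$ itself is entirely a union of $|L_1|$-divisors by showing the base locus of $|T|$ is empty: if a point $p \in C_1$ lay in $T_{s'}$ for all $s'$ in a neighborhood of $s$, then the rational map $\{p\} \times C_2 \dashrightarrow S$, $q \mapsto \varphi(p, q)$, would dominate $S$, which is impossible on dimensional grounds. Uniqueness of the resulting partition is automatic, since distinct fibers of $f_1$ are disjoint as divisors on $C_1$.

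For the subcase dichotomy, fix $D \in |L_1|$ with $D \subseteq T$ and set $B = \textup{pr}_2(\textup{pr}_1|_{\varphi^{-1}(s)}^{-1}(D)) \subseteq C_2$, so that $\# B \leq a_1 d_1 \leq k$ (the latter using $a_1 \mid k/d_1$ in case (2)). If $B$ contains a divisor in $|L_2|$, then $h^0(\O_{C_2}(B)) \geq 2$, and the same argument applied to $C_2$ — using Castelnuovo-Severi with $g(C_2) > (a_2-1)(\max\{k, a_2\}-1)$ together with an analogous dominance argument to rule out base points — yields that $B$ is a union of $|L_2|$-divisors.

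The step I expect to be the main obstacle is making the base-point-freeness argument precise, since the lemma only assumes $p_g(S) = 0$ rather than $q(S) = 0$. Without $q(S) = 0$, the linear equivalence class of $T_s$ may genuinely vary with $s$, so "base point of $|T_s|$" does not immediately imply "contained in $T_{s'}$ for $s'$ near $s$". The dominance argument must therefore be phrased directly in terms of the family $\{T_{s'}\}_{s'}$, using the fact that no point of $C_1$ can persist in $T_{s'}$ throughout an open subset of $S$.
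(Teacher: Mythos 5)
Your overall strategy (geometric Riemann--Roch to detect when case (1) fails, then Castelnuovo--Severi plus the non-factoring hypothesis to force the resulting pencil through $f_1$) is the same as the paper's, and that part is fine. The genuine gap is exactly the one you flag at the end: the passage from ``the moving part of $|T|$ is a union of $|L_1|$-divisors'' to ``$T$ itself is a union of $|L_1|$-divisors.'' Your proposed fix conflates two different notions. A base point of the \emph{linear system} $|T_s|$ on $C_1$ is a point lying in every effective divisor linearly equivalent to $T_s$; this has no a priori relation to the \emph{family} $\{T_{s'}\}_{s'\in S}$, since (as you note) the class of $T_{s'}$ may vary with $s'$, and even if it did not, the divisors $T_{s'}$ form only a two-dimensional subfamily of $|T_s|$. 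So the dominance argument about $\{p\}\times C_2$, while correct as a statement about the family, proves nothing about the base locus of $|T_s|$, and your proof of (2) is incomplete; the same gap recurs in your treatment of the subcase dichotomy on $C_2$.

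The missing ingredient --- and the engine of the paper's entire argument --- is an irreducibility (monodromy) argument rather than a base-locus argument. Once you know that $T_s$ contains \emph{one} element of $|L_1|$ (which your moving-part analysis already gives, regardless of whether the base locus is empty), consider the condition on $x\in U$ that the fiber $D_{1,x}$ of $f_1$ through $\pr_1(x)$ be contained in $\pr_1(\varphi^{-1}(\varphi(x)))$. This is a constructible condition cut out on the irreducible variety $U$, so it holds either on a dense open subset or on a proper closed subset; correspondingly, for a general $s$ either \emph{every} point of $T_s$ lies in an element of $|L_1|$ contained in $T_s$, or \emph{no} element of $|L_1|$ is contained in $T_s$. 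This gives the dichotomy (1)/(2) directly (uniqueness of the partition being automatic since fibers of $f_1$ are disjoint, as you say), and it sidesteps the base locus of $|T_s|$ entirely. The same device, applied to the condition that $\pr_1|_{F_x}^{-1}(D_{1,x})$ project to a set containing an element of $|L_2|$, yields the (2a)/(2b) dichotomy. Without some argument of this type your proof does not close.
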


\begin{figure}[htbp!]
\includegraphics[width=12cm]{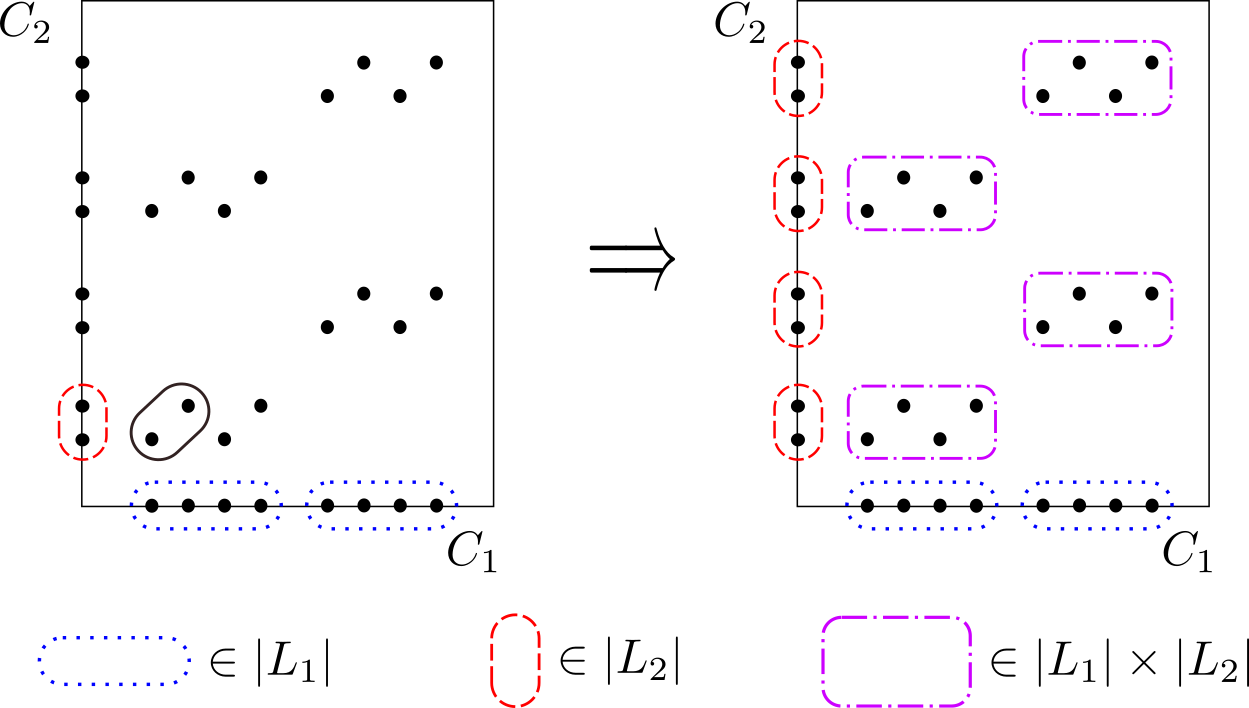}
\caption{The diagram above depicts case (2b) of Lemma~\ref{lem:cases}.}
\end{figure}

Let us introduce some additional notation which will help streamline our arguments. Consider the open subset $U\subseteq C_1\times C_2$ such that $\varphi|_{U}$ is an \'etale morphism and write
\[ F_x=_{\textup{def}}\varphi^{-1}(\varphi(x)) \qquad \text{for } x \in U. \]
By shrinking $U$ if needed, we may assume that $\text{pr}_1(F_x)$ is uniquely partitioned into elements of $|L_1|$ for all $x\in U$. Given $x\in U$, let $D_{i,x}$ be the element of $|L_i|$ satisfying
\[ \pr_{i}(x)\subseteq D_{i,x}\subseteq F_x. \]

\begin{proof}[Proof of Lemma~\ref{lem:cases}]
By an irreducibility argument similar to the one used in Lemma \ref{lem:deg}, if $\text{pr}_1(\varphi^{-1}(s))$ contains an element of $|L_{1}|$ then every point of $\text{pr}_1(\varphi^{-1}(s))$ is contained in an element of $|L_1|$. Since elements of $|L_1|$ are fibers of the gonal map, they have empty intersection. Hence $\text{pr}_1(\varphi^{-1}(s))$ is uniquely partitioned into elements of $|L_1|$.

Now suppose that $\text{pr}_1(\varphi^{-1}(s))$ does not contain an element of $|L_1|$. We claim that these points span a linear subspace of dimension 
\[ k/d_{1} = \# \text{pr}_1(\varphi^{-1}(s))-1\]
in canonical space. Indeed if these points spanned a smaller linear subspace, then by geometric Riemann-Roch they belong to a $g^r_{k/d_{1}}$ on $C_1$ for some $r\geq 1$. Any divisor in this $g^r_{k/d_{1}}$ belongs to a $g^1_{k/d_{1}}$. If the genus of $C_1$ is larger than $(a_{1} - 1) (k/d_{1} - 1)$, then any element of a $g^1_{k/d_{1}}$ must contain an element of $|L_1|$, which provides the desired contradiction.

As for the second claim, suppose that we are in case (2). Consider the closed subvariety
\[ V=_{\textup{def}}\left\{x\in U: \text{pr}_1|_{F_x}^{-1}(D_{1,x}) \text{ contains an element of }|L_2|\right\}\subseteq U. \]
Either $V=U$, in which case we are in situation (2b), or $V$ is a proper closed subset of $U$, in which case we are in situation (2a).
\end{proof}

The proof of Theorem \ref{thm:largeg} is easier in cases (1) and (2a), so we first single out these cases:

\begin{proposition}
Theorem \ref{thm:largeg} holds if $\varphi: C_1\times C_2\dashrightarrow S$ falls under case \textup{(1)} or case \textup{(2a)} of Lemma \ref{lem:cases}.
\end{proposition}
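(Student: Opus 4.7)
The plan is to derive a contradiction with the hypothesis $p_g(S)=0$ in both cases, mirroring the strategy of the proof of Theorem~\ref{thm:hyp}. Concretely, for a general $s\in S$ and $F_x=\varphi^{-1}(s)$, I will produce 1-forms $\omega_1\in H^0(C_1,\Omega^1)$ and $\omega_2\in H^0(C_2,\Omega^1)$ such that $\pr_1^*\omega_1\wedge \pr_2^*\omega_2$ vanishes at all but one point of $F_x$; Mumford's trace map then yields a nonzero element of $H^0(S,\Omega_S^2)$, contradicting $p_g(S)=0$. Thus I expect to conclude that cases (1) and (2a) cannot occur, so that Theorem~\ref{thm:largeg} holds vacuously whenever $\varphi$ falls under them.

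In case (1), the spanning hypothesis exhibits $\pr_1(F_x)$ as a set of $k/d_1$ linearly independent points in the canonical embedding of $C_1$, so for any chosen $A\in \pr_1(F_x)$ there is a 1-form $\omega_1$ on $C_1$ vanishing on $\pr_1(F_x)\setminus\{A\}$ and nonzero at $A$. It then suffices to find $\omega_2$ vanishing at $d_1-1$ of the $d_1$ distinct $C_2$-coordinates in the column $\pr_1^{-1}(A)\cap F_x$ and nonzero at the last; this holds as soon as those $d_1$ points are linearly independent in the canonical embedding of $C_2$. I would argue this by a Castelnuovo--Severi step symmetric to the one in Lemma~\ref{lem:cases}: otherwise they would lie in a $g^1_{d_1}$, and since $g(C_2)>(a_2-1)(d_1-1)$ and the gonal map $f_2$ does not factor, Lemma~\ref{lem:maps2curves} forces this pencil to factor through $f_2$, so the $d_1$ points would contain an element of $|L_2|$.

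In case (2a), $\pr_1(F_x)$ decomposes as $D_1\sqcup\cdots\sqcup D_m$ with $D_j\in |L_1|$ and $m=k/(a_1d_1)\leq k/a_1$. The genus assumption yields $g(C_1)>(a_1-1)(ma_1-1)$, so Lemma~\ref{lem:pq} produces $\omega_1$ on $C_1$ vanishing on $\bigcup_j \Supp(D_j)$ except at two chosen points $p,q\in \Supp(D_j)$ for some $j$. Then $\pr_1^*\omega_1$ is nonzero on at most $2d_1$ points of $F_x$, namely those with $C_1$-coordinate in $\{p,q\}$, and their $C_2$-coordinates lie in $T_{D_j}=\pr_2(\pr_1^{-1}(D_j)\cap F_x)$. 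The case (2a) hypothesis says $T_{D_j}$ contains no element of $|L_2|$; the Castelnuovo--Severi argument above, applied to the at most $a_1d_1\leq k$ points of $T_{D_j}$, shows $T_{D_j}$ is linearly independent in canonical space of $C_2$, hence so is every subset, and I obtain $\omega_2$ vanishing on all but one of the relevant $C_2$-coordinates. The resulting form $\pr_1^*\omega_1\wedge \pr_2^*\omega_2$ has the required vanishing behaviour.

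The delicate step, and the one I expect to require most of the work, is case (1): I must be able to choose $A$ so that the $d_1$ $C_2$-coordinates above $A$ do not contain an element of $|L_2|$. If every column carried such a gonal fiber then $F_x$ would possess a rigid product-like structure on the $C_2$ side, which I expect can be shown to contradict the failure of $\pr_1(F_x)$ to decompose into $|L_1|$-fibers (effectively pushing us out of case (1) into case (2b)); making this rigorous, perhaps by varying $s$ in a one-parameter family or by running the symmetric analysis on $\pr_2$, is the crux. The analogous difficulty does not appear in case (2a), since the (2a) hypothesis directly provides the linear independence used on $C_2$.
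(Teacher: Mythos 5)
There is a genuine gap, and it starts with your framing: you set out to show that cases (1) and (2a) ``cannot occur,'' so that the Proposition holds vacuously. But conclusion (i) of Theorem~\ref{thm:largeg} is a \emph{factorization} statement, not a nonexistence statement, and both cases genuinely occur for maps that factor through $\id_{C_1}\times f_2$ or $f_1\times\id_{C_2}$. The paper's proof accordingly splits each case into two subcases: in one subcase a $2$-form vanishing at all but one point of the fiber is traced to contradict $p_g(S)=0$, and in the other subcase Lemma~\ref{facto} is invoked to produce the factorization. Your proposal never invokes Lemma~\ref{facto}, and the places where your trace argument breaks down are exactly the subcases where factorization is the correct conclusion. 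Concretely: in case (1), the ``delicate step'' you flag --- every column $\Phi_s(p_1)$ over $p_1\in\pr_1(\varphi^{-1}(s))$ containing elements of $|L_2|$ --- does not push you into case (2b) (case (2b) is a statement about $\pr_1(\varphi^{-1}(s))$ decomposing into $|L_1|$-fibers, which is excluded by the case (1) hypothesis); rather, an irreducibility argument shows each $\Phi_s(p_1)$ is then uniquely partitioned into elements of $|L_2|$, and Lemma~\ref{facto} gives that $\varphi$ factors through $\id_{C_1}\times f_2$. No contradiction is available there, nor should one be.

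In case (2a) your argument has the same structural failure plus a quantitative one. After choosing $p,q\in\Supp(D_{1,x})$ and $\omega_1$ via Lemma~\ref{lem:pq}, you take $\omega_2$ vanishing at all but one of the $C_2$-coordinates appearing above $p$ and $q$. If $z\in\Phi_s(p)\cap\Phi_s(q)$ is the surviving coordinate, the form $\pr_1^*\omega_1\wedge\pr_2^*\omega_2$ is nonzero at the \emph{two} points $(p,z)$ and $(q,z)$, and the trace of a form nonvanishing at two points of the fiber need not be nonzero. The paper avoids this by first checking whether $\Phi_s(p_1)=\Phi_s(q_1)$ for all $p_1,q_1$ in a common $|L_1|$-fiber: if so, $\varphi^{-1}(s)$ is partitioned into sets $D\times\{z\}$ and Lemma~\ref{facto} yields the factorization through $f_1\times\id_{C_2}$; if not, one chooses $z_2\in\Phi_s(p_1)\setminus\Phi_s(q_1)$, which guarantees the $2$-form survives at exactly one point. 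Your Castelnuovo--Severi step for linear independence of the $C_2$-coordinates (via Lemma~\ref{lem:maps2curves} and the non-factoring of $f_2$) is fine and matches the paper, but without the factorization dichotomy the proof does not close.
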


\begin{proof}
Let us first take care of case (1). Given a generic $s\in S$ and any point $x\in \textup{pr}_1(\varphi^{-1}(s))$, if $g(C_1)>k/d_{1}-1$ then we can find a $1$-form $\omega_1$ on $C_1$ which vanishes at all the points in $\text{pr}_1(\varphi^{-1}(s))$ aside from $x$. Indeed, the points of $\textup{pr}_1(\varphi^{-1}(s))$ are linearly independent so $\textup{pr}_1(\varphi^{-1}(s))\setminus \{x\}$ spans a subspace of dimension $k/d_{1}-2$ in canonical space. Now a generic hyperplane in canonical space containing this subspace will not contain $\pr_1(\varphi^{-1}(s))$.

By an irreducibility argument similar to the one used in the proof of Lemma \ref{lem:deg}, for a general $s\in S$ either the set
\[ \Phi_{s}(p_{1}) \ =_{\textup{def}} \ \{ p_{2} \in C_2: (p_{1},p_{2})\in \varphi^{-1}(s)\}\subseteq C_2 \]
$(i)$ does not contain elements of $|L_2|$ for all $p_{1}\in \textup{pr}_1(\varphi^{-1}(s))$, or $(ii)$ is uniquely partitioned into elements of $|L_2|$ for all $p_{1} \in \textup{pr}_1(\varphi^{-1}(s))$. In case $(i)$, since 
\[ g(C_2)\geq \max\{ (a_2-1)(l-1),l \}\]
there is a $1$-form $\omega_2$ on $C_2$ vanishing at all points of $\Phi_s(x)$ except for one. In particular, the $2$-form
\[ \pr_{1}^{\ast} \omega_1 \wedge \pr_{2}^{\ast}\omega_2 \in H^0(C_1\times C_2,\Omega^2) \]
vanishes at all but one point of $\varphi^{-1}(s)$. By tracing to $S$, it follows that $0\neq \Tr_\varphi(\pr_{1}^{\ast} \omega_1 \wedge \pr_{2}^{\ast}\omega_2)\in H^0(S,\Omega^2)$, contradicting the assumption that $p_g(S)=0$.

In case $(ii)$, we can use Lemma \ref{facto} to deduce that the rational map $\varphi$ factors through
\[ \id_{C_1}\times f_{2}  \colon  C_1\times C_2 \longrightarrow C_1\times \mathbb{P}^1.\]

Let us now consider case (2a). For $x \in C_{1} \times C_{2}$, let $D_{1,x}$ be the element of $|L_1|$ which contains $\text{pr}_{1}(x)$. Consider a generic $s\in S$. If for all $x\in \varphi^{-1}(s)$ and all $p_{1},q_{1}\in \Supp(D_{1,x})$ we have
\[ \Phi_s(p_{1})=\Phi_s(q_{1}), \]
then $\varphi^{-1}(s)$ is partitioned into subsets of the form $D\times \{ z \}$ for some $D\in |L_1|$ and some $z \in C_2$. This is depicted in Figure~\ref{figure2a}.

\begin{figure}[htbp!]
\includegraphics[width=6cm]{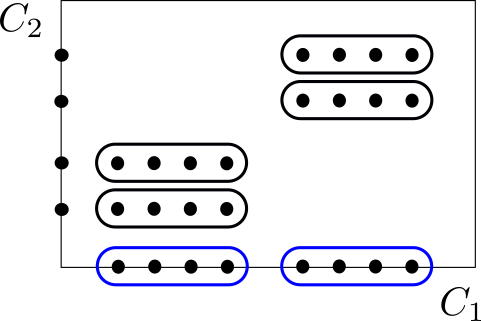}
\caption{Case (2a) when $\Phi_{s}(p_{1}) = \Phi_{s}(q_{1})$ for all $x\in \pr_1(\varphi^{-1}(s))$ and $p_{1}, q_{1} \in D_{1,x}$.}
\label{figure2a}
\end{figure}

\noindent By Lemma \ref{facto}, the map $\varphi$ must factor through
\[ f_1\times\id_{C_2} \colon C_1\times C_2\longrightarrow \mathbb{P}^1\times C_2. \]

Otherwise, pick $x\in \varphi^{-1}(s)$ and $p_{1},q_{1}\in D_{1,x}$ such that $\Phi_s(p_{1})\neq \Phi_s(q_{1})$, and without loss of generality consider
\[ z_{2} \in \Phi_s(p_{1}) \setminus \Phi_s(q_{1}).\]
Since $g(C_2)\geq 2l+1$ and $\Phi_s(p)$ does not contain an element of $|L_2|$, we can find a $1$-form $\omega_2$ on $C_2$ which vanishes everywhere on $\Phi_{s}(p_{1}) \cup \Phi_{s}(q_{1})$
aside from $z$. Then the $2$-form
\[ \pr_{1}^{\ast} \omega_1 \wedge \pr_{2}^{\ast} \omega_2 \in H^0(C_1\times C_2,\Omega^2) \]
vanishes at all but one point of $\varphi^{-1}(s)$, and tracing it to $S$ contradicts the fact that $p_g(S)=0$. 
\end{proof}

This leaves the remaining case (2b). In this setting, we will prove a series of lemmas that describe some strong constraints on the fibers of such a map $\varphi: C_1\times C_2\dashrightarrow S$.

\begin{lemma}\label{lem:T}
Suppose that $\varphi: C_1\times C_2\dashrightarrow S$ falls under case \textup{(2b)}. Then a generic fiber $\varphi^{-1}(s)$ is canonically partitioned into subsets $T_\alpha$ which are indexed by a set $J(s)$ and of the form $(D_{1,x}\times  D_{2,x})\cap \varphi^{-1}(s)$ such that:
\begin{itemize} \setlength\itemsep{0em}
    \item $\abs{T_\alpha}=\abs{T_\beta}$ for all $\alpha,\beta\in J(s)$;
    \item $\pr_i(T_\alpha)\in |L_i|$;
    \item The map $\pr_i: T_\alpha\longrightarrow C_i$ is a covering onto its image. The degree of this covering is independent of $\alpha$.
\end{itemize}
\end{lemma}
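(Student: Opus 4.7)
\emph{The plan.} I would prove the lemma in three steps: (i) define the partition canonically from the divisors $D_{i,x}$; (ii) use an irreducibility argument in the spirit of Lemma~\ref{lem:deg} to extract constancy of the numerical invariants across blocks; and (iii) identify $\pr_i(T_\alpha)=D_{i,\alpha}$ using hypothesis (2b) together with its symmetric counterpart.

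\emph{Canonical partition.} For $x\in U$, set $T_x:=(D_{1,x}\times D_{2,x})\cap F_x$. If $y\in T_x$, then $\pr_i(y)\in D_{i,x}$ for $i=1,2$; since $|L_i|$ is a base-point-free pencil whose members are the pairwise disjoint fibers of $f_i$, the unique element of $|L_i|$ through $\pr_i(y)$ is $D_{i,x}$, so $D_{i,y}=D_{i,x}$ and $T_y=T_x$. Hence $y\sim y'\iff T_y=T_{y'}$ is an equivalence relation on $F_x$, producing the desired canonical partition $F_x=\bigsqcup_{\alpha\in J(s)}T_\alpha$ in which each block has the asserted form $(D_{1,x}\times D_{2,x})\cap F_x$.

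\emph{Constancy of numerical invariants.} Mimicking Lemma~\ref{lem:deg}, the functions $x\mapsto |T_x|$, $x\mapsto |\pr_i(T_x)|$, and $x\mapsto |\pr_i^{-1}(\pr_i(x))\cap T_x|$ on $U$ are constructible and thus constant on a dense open subset. After shrinking $U$ and removing $\varphi^{-1}(\varphi(Z))$ for the non-generic locus $Z$ exactly as in the proof of Lemma~\ref{lem:deg}, I may assume all three are constant on $U$. This immediately yields the first bullet (all $T_\alpha$ have the same cardinality) and the third bullet (the restriction $\pr_i\colon T_\alpha\to \pr_i(T_\alpha)$ is a covering whose degree is independent of $\alpha$).

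\emph{The second bullet and the main obstacle.} For $\pr_2$, I appeal to case (2b) directly: setting $F_x^{(1,\alpha)}:=\pr_1|_{F_x}^{-1}(D_{1,\alpha})$, the image $\pr_2(F_x^{(1,\alpha)})$ is a union of elements of $|L_2|$, one of which must equal $D_{2,\alpha}$ because $\pr_2(x)\in D_{2,\alpha}$ and $|L_2|$-fibers are pairwise disjoint; hence every $q\in D_{2,\alpha}$ arises as $\pr_2(y')$ for some $y'\in F_x^{(1,\alpha)}\cap \pr_2^{-1}(q)\subseteq (D_{1,\alpha}\times D_{2,\alpha})\cap F_x=T_\alpha$, giving $\pr_2(T_\alpha)=D_{2,\alpha}\in |L_2|$. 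The symmetric equality $\pr_1(T_\alpha)=D_{1,\alpha}$ is the main obstacle, as it demands the analog of (2b) with $\pr_1$ and $\pr_2$ interchanged. Taking the union over $\alpha$ already shows $\pr_2(F_x)=\bigcup_\alpha \pr_2(F_x^{(1,\alpha)})$ is a disjoint union of $|L_2|$-fibers, so we are automatically in case (2) for $\pr_2$; to rule out the asymmetric case (2a) for $\pr_2$, I would rerun the 2-form tracing argument from the proof of the proposition that handled case (2a), with the roles of $\pr_1$ and $\pr_2$ swapped, producing a nonzero trace $\Tr_\varphi(\pr_1^{\ast}\omega_1\wedge \pr_2^{\ast}\omega_2)\in H^0(S,\Omega^2)$ that contradicts $p_g(S)=0$. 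Once (2b) is secured for $\pr_2$ as well, the argument used above for $\pr_2$ applies verbatim to $\pr_1$ and yields $\pr_1(T_\alpha)=D_{1,\alpha}$, completing the proof.
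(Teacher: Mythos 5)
Your construction of the canonical partition and your treatment of the first and third bullets coincide with the paper's: the paper likewise takes the blocks to be $(D_{1,x}\times D_{2,x})\cap\varphi^{-1}(s)$ (pairwise equal or disjoint because members of $|L_i|$ are disjoint fibers of $f_i$), and obtains constancy of $\abs{T_\alpha}$ and of the covering degree of $\pr_i|_{T_\alpha}$ by the same irreducibility/constructibility device as in Lemma~\ref{lem:deg}, phrased there as ``otherwise one could canonically single out the union of the largest-cardinality blocks.'' Your derivation of $\pr_2(T_\alpha)=D_{2,\alpha}$ from (2b) is correct and is in fact more explicit than the paper, which dispatches the second bullet in one sentence.

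The one step that does not close is your treatment of $\pr_1$. You are right to flag that (2b) is asymmetric and that $\pr_1(T_\alpha)=D_{1,\alpha}$ needs the mirror-image condition for $\pr_2$; the gap is in your claim that ``case (2a) for $\pr_2$'' is always killed by tracing a decomposable $2$-form. In the paper's handling of case (2a) there are two branches: the $2$-form contradiction is only available when $\Phi_s(p)\neq\Phi_s(q)$ for two points $p,q$ of the same gonal fiber; in the complementary branch the conclusion is that $\varphi$ factors through $\id_{C_1}\times f_{2}$ (resp.\ $f_1\times\id_{C_2}$ after swapping), which is conclusion (i) of Theorem~\ref{thm:largeg} and not a contradiction. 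So what your argument actually yields is: either $\varphi$ already factors as in Theorem~\ref{thm:largeg}(i), in which case Lemma~\ref{lem:T} is no longer needed, or the symmetric analogue of (2b) holds and your argument gives $\pr_1(T_\alpha)=D_{1,\alpha}$. That caveat is harmless for the proof of the theorem (and the paper's own one-line justification of the second bullet is, if anything, less careful than yours on exactly this point), but as a proof of the lemma literally as stated the ``rule out (2a) for $\pr_2$'' step is overstated and should be replaced by the dichotomy above.
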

\begin{proof}
For any point $x \in \varphi^{-1}(s)$, the $T_{\alpha}$ containing $x$ is given as $(D_{1,x}\times  D_{2,x})\cap \varphi^{-1}(s)\subseteq C_1\times C_2$. The first claim follows from the fact that we would otherwise be able to canonically single out the union of the $T_{\alpha}$ with the largest cardinality, thereby violating irreducibility of $C_1\times C_2$. The second claim follows from the definition of the $T_{\alpha}$ and the fact that we are in case (2b). For the third claim, consider the projection $\pr_{i} \big|_{T_{\alpha}}$ onto its image and collect the union of the largest cardinality fibers in $T_{\alpha}$ for all $\alpha\in J$. By an irreducibility argument, this must be all of $\varphi^{-1}(s)$ and thus $\pr_{i} \big|_{T_{\alpha}}$ is a covering of its image.
\end{proof}

\begin{definition}
A subset $T$ of the Cartesian product of two sets $A$ and $B$ is called \textit{checkered} if there are equal size partitions $A=A_1\sqcup A_2$, $B=B_1\sqcup B_2$ such that
$$T=(A_1\times B_1)\sqcup (A_2\times B_2)\subseteq A\times B.$$
\end{definition}

\begin{lemma}
Suppose that $\varphi$ falls under case \textup{(2b)} and let $T_\alpha$, $\alpha\in J(s)$ be as defined in Lemma \ref{lem:T} for a generic fiber $\varphi^{-1}(s)$. Then either
\begin{itemize}\setlength\itemsep{0em}
    \item $T_\alpha=\textup{pr}_1(T_\alpha)\times \textup{pr}_2(T_\alpha)$ for all $\alpha\in J(s)$, or
    \item $T_\alpha$ is a checkered subset of $\textup{pr}_1(T_\alpha)\times \textup{pr}_2(T_\alpha)$ for all $\alpha\in J(s)$.
\end{itemize}
\end{lemma}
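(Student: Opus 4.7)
The plan is to translate the claim into a statement about the bipartite relation $T_\alpha \subseteq D_1 \times D_2$ (writing $D_i = D_{i,\alpha}$) and to combine Lemma~\ref{lem:pq} with the Mumford trace argument to eliminate all configurations other than ``complete'' and ``two-block checkered''. A preliminary observation is that distinct $T_\beta$'s correspond to distinct pairs $(D_{1,\beta}, D_{2,\beta}) \in |L_1| \times |L_2|$: if the pairs coincided, then both sets would equal $(D_1 \times D_2) \cap \varphi^{-1}(s)$, and since the fibers of the gonal maps are pairwise disjoint this forces $T_\beta = T_\alpha$. Consequently, a point of $D_1 \times D_2$ lies in $\varphi^{-1}(s)$ if and only if it lies in $T_\alpha$.

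For $p \in D_1$ set $B_p = \{q \in D_2 : (p,q) \in T_\alpha\}$; by Lemma~\ref{lem:T}, $|B_p| = c_1$ is independent of $p$. I would first show that for any $p_1, p_2 \in D_1$ the rows $B_{p_1}$ and $B_{p_2}$ are either equal or disjoint. Suppose neither; since $|B_{p_1}| = |B_{p_2}|$, both $B_{p_1} \setminus B_{p_2}$ and $B_{p_2} \setminus B_{p_1}$ are nonempty. By Lemma~\ref{lem:pq} (whose genus hypothesis follows from $g(C_i) > (a_i-1)(\max\{k, a_i\}-1)$), one can construct $\omega_1 \in H^0(\Omega_{C_1})$ vanishing on $\pr_1(\varphi^{-1}(s))$ except at $p_1, p_2$, and analogously $\omega_2 \in H^0(\Omega_{C_2})$ for prescribed $q_1, q_2 \in D_2$. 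The 2-form $\pr_1^*\omega_1 \wedge \pr_2^*\omega_2$ restricted to $\varphi^{-1}(s)$ is supported on the four points $(p_i, q_j)$, and $(p_i, q_j)\in T_\alpha$ iff $q_j\in B_{p_i}$. Varying $(q_1,q_2)$ over pairs drawn from $B_{p_1}\cap B_{p_2}$, $B_{p_1}\setminus B_{p_2}$, and $B_{p_2}\setminus B_{p_1}$ produces several different bilinear identities in the values $\omega_i(p_j), \omega_i(q_j)$ and the Jacobian factors, each of which is forced to vanish by $p_g(S)=0$ via Mumford's trace; taken together these constraints are incompatible.

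With rows equal-or-disjoint, $D_1$ partitions into classes $A_1, \ldots, A_k$, and the corresponding values $B_1, \ldots, B_k$ partition $D_2 = \pr_2(T_\alpha)$, giving $T_\alpha = \bigsqcup_{i=1}^k A_i \times B_i$. Regularity of $\pr_2|_{T_\alpha}$ forces $|A_i| = c_2$ and $|B_i| = c_1$, so $|A_i| = a_1/k$ and $|B_i| = a_2/k$. To show $k \leq 2$, assume $k \geq 3$ and pick $p_1 \in A_1$, $p_2 \in A_2$, $q_1 \in B_1$, $q_2 \in B_3$; then only $(p_1, q_1)$ among the four candidates $(p_i, q_j)$ lies in $T_\alpha$, since the others lie in $A_i \times B_j$ with $i \neq j$. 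By Lemma~\ref{lem:pq}, take $\omega_1$ vanishing on $\pr_1(\varphi^{-1}(s)) \setminus \{p_1, p_2\}$ and $\omega_2$ vanishing on $\pr_2(\varphi^{-1}(s)) \setminus \{q_1, q_2\}$. Then $\pr_1^*\omega_1 \wedge \pr_2^*\omega_2$ vanishes on $\varphi^{-1}(s)$ except at $(p_1, q_1)$, so its Mumford trace is a nonzero element of $H^0(\Omega^2_S)$, contradicting $p_g(S)=0$. Hence $k \leq 2$, giving the desired dichotomy.

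The main obstacle is the ``equal or disjoint'' step: since $|mL_i|$ is base-point-free, the values of $\omega_i$ at points of a common fiber are tied by a nonzero linear relation, so a single $2\times 2$ pattern with two or three incidences does not directly produce a one-incidence configuration. The contradiction must therefore be extracted by combining multiple choices of $(q_1, q_2)$ so that the resulting linear relations on the Jacobian factors become overdetermined.
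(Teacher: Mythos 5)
Your overall strategy (reduce to $2\times 2$ sub-blocks $\{p_1,p_2\}\times\{q_1,q_2\}$ of $D_{1,x}\times D_{2,x}$, kill bad configurations by tracing $\pr_1^*\omega_1\wedge\pr_2^*\omega_2$ with $\omega_i$ supplied by Lemma~\ref{lem:pq}, then do combinatorics) is the same as the paper's, and your endgame is fine: once rows are equal-or-disjoint, the covering statement in Lemma~\ref{lem:T} equalizes the block sizes, and a block count $\geq 3$ produces a $2\times 2$ sub-block meeting $\varphi^{-1}(s)$ in exactly one point, whose trace is a nonzero $2$-form on $S$. But the step you yourself flag as ``the main obstacle'' is a genuine gap, and it is exactly where the paper's one nontrivial idea lives. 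If $B_{p_1}\neq B_{p_2}$ intersect, taking $q_1\in B_{p_1}\cap B_{p_2}$ and $q_2\in B_{p_1}\setminus B_{p_2}$ yields a $2\times 2$ block meeting the fiber in exactly \emph{three} points, and a single trace identity there is one linear relation among three nonzero terms, which can perfectly well hold. Your proposed fix --- accumulating several such bilinear identities over different choices of $(q_1,q_2)$ until they become ``overdetermined'' --- does not work as stated: each identity uses a \emph{different} $\omega_2$ produced by Lemma~\ref{lem:pq} for a different pair of points, the lemma pins down the relevant values only up to one scalar per form, and the Jacobian factors are unknown quantities that appear afresh in each identity; there is no visible inconsistency to extract.

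The paper's resolution is not to fight the three-point configuration at $s$ at all, but to move to a neighboring fiber: if $(p_2,q_2)$ is the missing fourth point, set $s'=\varphi(p_2,q_2)$. Since $D_{1,(p_2,q_2)}$ and $D_{2,(p_2,q_2)}$ are the \emph{same} gonal fibers $D_1, D_2$ (the elements of $|L_i|$ partition $C_i$), and since $\varphi^{-1}(s')$ is disjoint from $\varphi^{-1}(s)$, the block $\{p_1,p_2\}\times\{q_1,q_2\}$ meets $\varphi^{-1}(s')$ in exactly the one point $(p_2,q_2)$; one checks $s'$ is still generic, and the one-point trace argument applied at $s'$ gives the contradiction. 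In other words, a three-incidence block in one fiber forces a one-incidence block in another, which is the configuration you already know how to kill. Without this (or some substitute), your equal-or-disjoint claim, and hence the dichotomy, is unproved.
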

\begin{proof}
Fix $\alpha\in J(s)$. In order to describe which points of $\textup{pr}_1(T_\alpha)\times \textup{pr}_2(T_\alpha)$ are in $T_\alpha$ and which points are not, we will represent $T_\alpha$ as a rectangular box with a lattice of circles. The rows corresponds to points of $\textup{pr}_2(T_\alpha)$ and the columns to points of $\textup{pr}_1(T_\alpha)$. If the circle in the entry corresponding to $(p_1,p_2)\in \textup{pr}_1(T_\alpha)\times \textup{pr}_2(T_\alpha)$ is filled in then $(p_1,p_2)\in T_\alpha$. For instance, the diagram
\begin{equation*} \def\arraystretch{1.5}
\begin{array}{ccccc}
    & p_{1} & q_{1} & r_{1} & s_{1} \\
    p_{2} & \tikzmarknode{1a11}{\bullet} & \circ & \bullet & \circ \\
    q_{2} & \circ & \bullet & \circ & \tikzmarknode{1a25}{\bullet}
\end{array}
    \begin{tikzpicture}[overlay,remember picture]
    \node[fit=(1a11) (1a25),draw=black,fill opacity=0]{};
    \end{tikzpicture}
\end{equation*}
corresponds to $T_\alpha=\{ (p_1,p_2),(q_1,q_2),(r_1,p_2), (s_1,q_2) \}$, and $T_{\alpha}$ is an example of a checkered subset of $\pr_1(T_\alpha)\times \pr_2(T_\alpha)$. We will mostly avoid labelling the columns and rows and allow ourselves to permutes rows and columns as we please.

Consider any two points $p_{i}, q_{i} \in \pr_{i}(T_{\alpha})$ for $i = 1, 2$. Suppose that the set $T_\alpha\cap (\{p_1,q_1\}\times \{p_2,q_2\})$ has the following configuration:
\begin{equation}\label{config1} \def\arraystretch{1}
\begin{array}{cc}
 \tikzmarknode{1a22}{\bullet} & \circ \\
\circ & \tikzmarknode{1a33}{\circ}
\end{array}
    \begin{tikzpicture}[overlay,remember picture]
    \node[fit=(1a22) (1a33),draw=black,fill opacity=0]{};
    \end{tikzpicture}
\end{equation}
For $i=1,2$, by Lemma~\ref{lem:pq} we can find a form $\omega_{i} \in H^{0}(C_{i}, \Omega^{1}_{C_{i}})$ which vanishes everywhere on $\pr_{i}(\varphi^{-1}(s))$ except at $p_{i}$ and $q_{i}$. The 2-form $\pr_{1}^{\ast}\omega_{1} \wedge \pr_{2}^{\ast}\omega_{2}$ then vanishes everywhere on $\varphi^{-1}(s)$ except at the point which corresponds to the filled in circle in diagram \eqref{config1}, thereby contradicting the assumption $p_g(S)=0$. Similarly, suppose that the set $T_\alpha\cap (\{p_1,q_1\}\times \{p_2,q_2\})$ has the following configuration:
\begin{equation}\label{config2} \def\arraystretch{1}
\begin{array}{cc}
\tikzmarknode{1a22}{\circ} & \bullet \\
 \bullet & \tikzmarknode{1a33}{\bullet}
\end{array}
\begin{tikzpicture}[overlay,remember picture]
    \node[fit=(1a22) (1a33),draw=black,fill opacity=0]{};
\end{tikzpicture}
\end{equation}
If $(p_1,p_2)$ is the point corresponding to the empty circle in \eqref{config2} the first configuration in \eqref{config1} must appear in $T_\beta$ for some $\beta\in J(\varphi(p_1,p_2))$. Again, we are able to find a form which vanishes everywhere on $\varphi^{-1}(\varphi(p_1,p_2))$ aside from a point, which contradicts the assumption $p_g(S)=0$. Indeed, given $s\in S$ generic, any $\alpha\in J(s)$, and any choice of $p_1,q_1\in \pr_1T_\alpha$ and $p_2,q_2\in \pr_2 T_\alpha$ giving rise to configuration \eqref{config2}, the point $\varphi(p_1,p_2)\in S$ is also generic.

Thus, we may assume that for any choice of $p_1,q_1\in \pr_1T_\alpha$ and $p_2,q_2\in \pr_2 T_\alpha$ the set $$T_\alpha\cap (\{p_1,q_1\}\times \{p_2,q_2\})$$
consists of $0,2,$ or $4$ points. After permuting the rows and columns of the diagram of $T_\alpha$ we can assume that both the first row and first column consist of a series of $\bullet$ followed only by $\circ$ as pictured in the first diagram of Figure~\ref{Figure:T_alpha}. The fact that the configurations \eqref{config1} and \eqref{config2} do not arise in this diagram places constraints on what the rest of this diagram can look like.

It is useful to distinguish two cases. In the first case the entire first row consists only of $\bullet$ or only of $\circ$. It is easy to see that the fact that configurations of the form \eqref{config1} and \eqref{config2} do not arise forces the entire diagram to consist of a series of rows filled with $\bullet$ followed by a series of rows filled with $\circ$. Since by assumption $T_\alpha\neq \emptyset$ and the projection of $T_\alpha$ to the second factor must surject onto $\text{pr}_2(T_\alpha)$, there can be no rows consisting entirely of $\circ$ and thus $T_\alpha=\textup{pr}_1(T_\alpha)\times \textup{pr}_2(T_\alpha)$.

The second case where the first row consists of a positive number of $\bullet$ followed by a positive number of $\circ$ is more interesting:
\begin{enumerate}
    \item Consider the second row of the diagram from the left. If the second entry of this row from the top were $\circ$ then the $2\times 2$ square made up of the first two entries of the first two columns would be 
    \[ \def\arraystretch{1}
\begin{array}{cc}
\tikzmarknode{1a22}{\bullet} & \bullet \\
 \bullet & \tikzmarknode{1a33}{\circ} \\
\end{array}
\begin{tikzpicture}[overlay,remember picture]
    \node[fit=(1a22) (1a33),draw=black,fill opacity=0]{};
\end{tikzpicture}
    \]
This would contradict the parity statement about sets of the form $T_\alpha\cap (\{p_1,q_1\}\times \{p_2,q_2\})$. The second entry of the second row must therefore be $\bullet$. Moving down the second column, the same reasoning shows that every row containing a $\bullet$ in the first column contains a $\bullet$ in the second column. If the first row from the top containing a $\circ$ in the first column also contained a $\bullet$ in the second column we would get the following forbidden configuration:
 \[ \def\arraystretch{1}
\begin{array}{cc}
\tikzmarknode{1a22}{\bullet} & \bullet \\
 \circ & \tikzmarknode{1a33}{\bullet} \\
\end{array}
\begin{tikzpicture}[overlay,remember picture]
    \node[fit=(1a22) (1a33),draw=black,fill opacity=0]{};
\end{tikzpicture} \]
Going down the second column further, the same reasoning then shows that every row containing $\circ$ in the first column also contains $\circ$ in the second.

\item We can repeat the argument used in (1) to show that every column starting with a $\bullet$ consists of a series of $\bullet$ followed by a series of $\circ$. Moreover, the number of $\bullet$ is constant. The third diagram in Figure~\ref{Figure:T_alpha} illustrates what we know about the diagram at this point of the argument.

\item Now consider the first column from the left which starts with a $\circ$. This is also the first column which we have yet to describe. If the first entry of the second row were a $\bullet$ and the second entry of this column were a $\bullet$ we would get a forbidden configuration of the form
 \[ \def\arraystretch{1}
\begin{array}{cc}
\tikzmarknode{1a22}{\bullet} & \circ \\
 \bullet & \tikzmarknode{1a33}{\bullet} \\
\end{array}
\begin{tikzpicture}[overlay,remember picture]
    \node[fit=(1a22) (1a33),draw=black,fill opacity=0]{};
\end{tikzpicture} \]
Repeating this argument, we can check that every row which starts with a $\bullet$ contains a $\circ$ in this column and every row that starts with a $\circ$ contains a $\bullet$ in that column. Moving to the next column and repeating the same argument shows that $T_\alpha$ must be as pictured in the last diagram of Figure \ref{Figure:T_alpha}. It follows that $T_\alpha$ is a checkered subset of $\textup{pr}_1(T_\alpha)\times \textup{pr}_2(T_\alpha)$.
\end{enumerate}

\begin{figure}[htbp!]
\[
\begin{array}{cccccc}
    \tikzmarknode{1a11}{\bullet} & \cdots & \bullet & \circ & \cdots & \circ \\
    \vdots & & & & & \\
    \bullet & & & & & \\
    \circ & & & & & \\
    \vdots & & & & & \\
    \circ & & & & & \tikzmarknode{1a66}{\phantom\bullet}
\end{array}
\underset{(1)}{\longleadsto{1}}
\begin{array}{ccccccc}
    \tikzmarknode{2a11}{\bullet} & \bullet & \cdots & \bullet & \circ & \cdots & \circ \\
    \vdots & \vdots & & & & & \\
    \bullet & \bullet & & & & & \\
    \circ & \circ & & & & & \\
    \vdots & \vdots & & & & & \\
    \circ & \circ & & & & & \tikzmarknode{2a77}{\phantom\bullet}
    \begin{tikzpicture}[overlay,remember picture]
    \node[fit=(1a11) (1a66),draw=black,fill opacity=0]{};
    \node[fit=(2a11) (2a77),draw=black,fill opacity=0]{};
    \end{tikzpicture}
\end{array}
\underset{(2)}{\longleadsto{1}}
\]

\[
\underset{(2)}{\longleadsto{1}}
\begin{array}{cccccc}
    \tikzmarknode{3a11}{\bullet} & \cdots & \bullet & \circ & \cdots & \circ \\
    \vdots & & \vdots & & & \\
    \bullet & \cdots & \bullet & & & \\
    \circ & \cdots & \circ & & & \\
    \vdots & & \vdots & & & \\
    \circ & \cdots & \circ & & & \tikzmarknode{3a66}{\phantom\bullet}
\end{array}
\underset{(3)}{\longleadsto{1}}
\begin{array}{cccccc}
    \tikzmarknode{4a11}{\bullet} & \cdots & \bullet & \circ & \cdots & \circ \\
    \vdots & & \vdots & \vdots & & \vdots \\
    \bullet & \cdots & \bullet & \circ & & \circ \\
    \circ & \cdots & \circ & \bullet & \cdots & \bullet \\
    \vdots & & \vdots & \vdots & & \vdots \\
    \circ & \cdots & \circ & \bullet & \cdots & \tikzmarknode{4a66}{\bullet}
    \begin{tikzpicture}[overlay,remember picture]
    \node[fit=(3a11) (3a66),draw=black,fill opacity=0]{};
    \node[fit=(4a11) (4a66),draw=black,fill opacity=0]{};
    \end{tikzpicture}
\end{array}
\]
\caption{A picture of the diagram for $T_{\alpha} \subseteq \pr_{1}(T_{\alpha}) \times \pr_{2}(T_{\alpha})$.}
\label{Figure:T_alpha}
\end{figure}
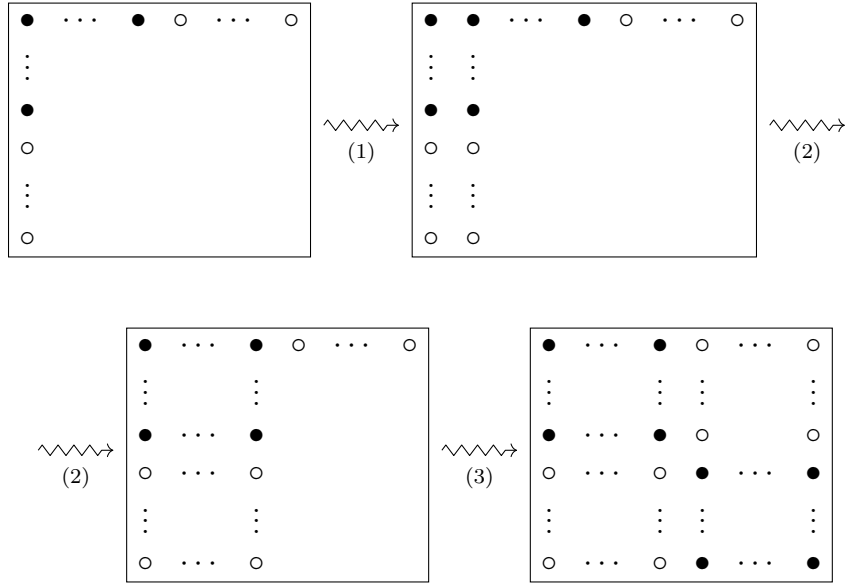

\end{proof}

We are finally ready to resolve the last case of Theorem \ref{thm:largeg}. 
\begin{proposition}
Theorem \ref{thm:largeg} holds if $\varphi: C_1\times C_2\dashrightarrow S$ falls under case (2b) of Lemma \ref{lem:cases}.
\end{proposition}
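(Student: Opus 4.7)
The plan is to split case (2b) according to the preceding dichotomy lemma: either each $T_\alpha$ equals the full product $\pr_1(T_\alpha)\times \pr_2(T_\alpha)$, or else $T_\alpha$ is a checkered subset of this product (the latter possible only when both $a_1$ and $a_2$ are even).

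In the product sub-case, writing $T_\alpha = D_1\times D_2 = \bigsqcup_{z\in D_2} D_1\times\{z\}$ exhibits $T_\alpha$ as a disjoint union of fibers of $f_1\times \id_{C_2}$. Hence a generic fiber $\varphi^{-1}(s) = \bigsqcup_\alpha T_\alpha$ is a disjoint union of fibers of $f_1\times \id_{C_2}$, and Lemma~\ref{facto} yields that $\varphi$ factors through $f_1\times \id_{C_2}$, establishing part~(i). Assuming $q(S)=0$ for part~(ii), the induced dominant rational map $\P^1\times C_2\dashrightarrow S$ has degree $k/a_1$, which is at least $a_2$ by the Albanese argument at the start of the proof of~(ii); therefore $k\geq a_1a_2$.

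The substantive case is the checkered sub-case, where $T_\alpha = (A_1\times B_1)\sqcup (A_2\times B_2)$ with $|A_i|=a_1/2$ and $|B_i|=a_2/2$. I would rule out this configuration by a 2-form trace argument in the spirit of cases (1) and (2a). For generic $s\in S$ and fixed $\alpha\in J(s)$, choose $p_1\in A_1^\alpha$, $q_1\in A_2^\alpha$, $z_2\in B_1^\alpha$, $z_2'\in B_2^\alpha$. By Lemma~\ref{lem:pq} applied with $m=|J(s)|$ on each curve, there exist 1-forms $\omega_1$ on $C_1$ vanishing on $\pr_1(\varphi^{-1}(s))\setminus\{p_1,q_1\}$ and $\omega_2$ on $C_2$ vanishing on $\pr_2(\varphi^{-1}(s))\setminus\{z_2,z_2'\}$, both nonvanishing at the selected points. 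Of the four ``corners'' $\{p_1,q_1\}\times \{z_2,z_2'\}$, only the diagonal pair $(p_1,z_2)$ and $(q_1,z_2')$ lies in $T_\alpha$, so the trace of $\pr_1^*\omega_1\wedge \pr_2^*\omega_2$ at $s$ is a sum of two terms which must vanish since $p_g(S)=0$. Combining these identities as the chosen points range over $A_1^\alpha,A_2^\alpha,B_1^\alpha,B_2^\alpha$ with the observation (implicit in the proof of Lemma~\ref{lem:pq}) that the evaluation maps $V_i\to \mathbb{C}^2$ have one-dimensional image, I would extract that the unordered partition $\{A_1,A_2\}$ of $\pr_1(T_\alpha)\in|L_1|$ depends only on the underlying divisor in $|L_1|$, not on the auxiliary choice of $(s,\alpha)$. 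Such a canonical partition of each generic fiber of $f_1$ into two equal halves yields a factorization $C_1\longrightarrow C_1/\!\!\sim\,\longrightarrow\P^1$ with the first map of degree $a_1/2$ and the second of degree $2$, contradicting the assumption that $f_1$ does not factor (if $a_1=2$, one runs the same argument on $f_2$ using the partition of $\pr_2(T_\alpha)$).

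The main obstacle is extracting canonicity of the partition from the trace identities in the checkered sub-case: turning the bilinear relations among the Jacobians of $\varphi$ at the diagonal corners into a statement that the partition is rigid. A cleaner alternative under the added assumption $q(S)=0$ is the Albanese reduction: the rational map $S\dashrightarrow \mathrm{Pic}^{a_1/2}(C_1)$ sending $s$ to the class of $A_1^\alpha$ factors through $\mathrm{Alb}(S)=0$, placing all the half-divisors in a single linear system of positive dimension on $C_1$ and producing a $g^1_{a_1/2}$ that violates $\gon(C_1)=a_1$; however, this Albanese route requires $q(S)=0$ and therefore does not immediately handle part~(i), which is claimed under only the hypothesis $p_g(S)=0$.
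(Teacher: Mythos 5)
Your handling of the sub-case $T_\alpha=\pr_1(T_\alpha)\times\pr_2(T_\alpha)$ (disjoint union of fibers of $f_1\times\id_{C_2}$, then Lemma~\ref{facto}) and your deduction of (ii) from (i) agree with the paper. The genuine gap is exactly the one you flag yourself: in the checkered sub-case you never establish that the partition $\{A_1,A_2\}$ is rigid, and neither of your two routes closes it. The trace identities only assert that, for each choice of $p_1,q_1,z_2,z_2'$, a sum of two terms of the form (Jacobian factor of $\varphi$ at a diagonal corner) times $\omega_1(\cdot)\,\omega_2(\cdot)$ vanishes; these are single linear relations, internal to one fiber $\varphi^{-1}(s)$, among quantities that are not otherwise constrained by the combinatorics, so they cannot by themselves compare the partitions arising from different $(s,\alpha)$, which is what ``canonicity'' would require. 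The Albanese alternative is moreover not well-defined as stated: $s\mapsto[A_1^\alpha]$ requires choosing one $\alpha\in J(s)$ and one of the two halves, while the only choice-free combination $\sum_{\alpha}([A_1^\alpha]+[A_2^\alpha])=[\pr_1(\varphi^{-1}(s))]$ is constant and carries no information; and, as you note, it would in any case need $q(S)=0$ and so could not prove part (i).

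The paper closes this case without any further trace or Albanese argument, and the missing idea is that no a priori rigidity is needed: one only needs an \emph{algebraic family} of partitions over the pencil $|L_1|$, and the checkered structure supplies this for free. Concretely, fix a general $c\in C_2$; the assignment sending $x_1$ to the unordered pair consisting of $A_1=\{y_1\in D_{1,(x_1,c)}:(y_1,c)\in F_{(x_1,c)}\}$ and its complement in $D_{1,(x_1,c)}$ is algebraic in $x_1$, and its closure is a subvariety $Z'\subseteq\Sym^2(\Sym^{a_1'}(C_1))$ mapping birationally to $Z=|L_1|\subseteq\Sym^{a_1}(C_1)$ compatibly with the addition maps. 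A separate lemma then shows, via the incidence correspondence $I\subseteq C_1\times\Sym^{a_1'}(C_1)$ and Lemma~\ref{facto}, that the existence of such a $Z'$ forces $f_1$ to factor through a degree-$a_1'$ map $C_1\dashrightarrow C'$, contradicting the hypothesis that the gonal map does not factor. Replacing your rigidity step by this construction is what is needed to complete the argument.
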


\begin{proof}
Consider a general $s\in S$ and $\alpha\in J(s)$. We may assume that $T_\alpha$ is checkered as a subset of $\pr_1(T_\alpha)\times\pr_2(T_\alpha)$. This allows us to write $a_1=2a_1'$ for some positive integer $a_1'$. Indeed, if $T_\alpha=\textup{pr}_1(T_\alpha)\times \textup{pr}_2(T_\alpha)$ for all $\alpha\in J$ then $\varphi$ factors through $f_1\times f_2: C_1\times C_2\longrightarrow \mathbb{P}^1\times \mathbb{P}^1$ by Lemma \ref{facto}. The idea is that this checkered structure on $T_\alpha$ may be used to find an algebraically defined partition of the elements of $|L_1|$ into two subsets of equal sizes. Such a partition gives rise to a factorization of the gonal map $f_1$ associated to the pencil $|L_1|$.

Let $U\subseteq C_1\times C_2$ be an open on which $\varphi$ is \'etale. Fix a point $c\in \text{pr}_2(U)$ and consider the subvariety
\begin{align*}
Z' &=_{\textup{def}} \overline{\left\{ \let\scriptstyle\textstyle \substack{ \left[\pr_1\left((D_{1,(x_1,c)}\times D_{2,(x_1,c)})\cap F_{(x_1,c)}\right) \right] + \\ \left[ D_{1,(x_1,c)}\setminus \pr_1\left((D_{1,(x_1,c)}\times D_{2,(x_1,c)})\cap F_{(x_1,c)}\right) \right]} \ \middle| \ x_1\in \text{pr}_1(U)\right\}} \\
&\subseteq \ \Sym^2(\Sym^{a_1'}(C_1)).
\end{align*}
The following lemma then implies that the gonal map $C_1\longrightarrow \mathbb{P}^1$ must factor, which provides the desired contradiction.
\end{proof}

\begin{lemma}
Consider a curve $C$ with a morphism $f \colon C \longrightarrow \mathbb{P}^1$ of degree $k$. Write $Z\subseteq \Sym^k(C)$ for the corresponding linear system. Suppose that $k=d k'$ for some integers $d$ and $k'$ and that there exists a subvariety $Z'\subseteq \Sym^d(\Sym^{k'}(C))$ along with a birational map $\nu \colon Z' \rightarrow Z$ which fit into the commutative diagram
\begin{center}
\begin{tikzcd}
Z' \arrow[d, swap, "\nu"] \arrow[r, hook] & \Sym^{d}(\Sym^{k'}(C)) \arrow[d] \\
Z \arrow[r, hook] & \Sym^{k}(C).
\end{tikzcd}
\end{center}
Then the map $f$ factors through a map $C \dashrightarrow C'$ of degree $k'$, for some curve $C'$.
\end{lemma}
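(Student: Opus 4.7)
The plan is to use the birational decomposition provided by $Z'$ to construct a factorization of $f$ as the composition of a degree-$k'$ map $C \to C'$ with a degree-$d$ map $C' \to \mathbb{P}^1$. The central observation is that, via $\nu^{-1}$, a generic fiber $D_t = f^{-1}(t) \in Z$ corresponds to a unique unordered $d$-tuple $\{E_{t,1}, \ldots, E_{t,d}\}$ of effective divisors of degree $k'$ on $C$ with $E_{t,1} + \cdots + E_{t,d} = D_t$.

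First, I would consider the natural degree-$d$ cover $\widetilde{Z'} \to Z'$ obtained by singling out one of the $d$ divisors in each unordered tuple, concretely by pulling $Z'$ back along the symmetrization map $\Sym^{d-1}(\Sym^{k'}(C)) \times \Sym^{k'}(C) \to \Sym^{d}(\Sym^{k'}(C))$. The projection to the last factor yields a morphism $\widetilde{Z'} \to \Sym^{k'}(C)$, and I take $W \subseteq \Sym^{k'}(C)$ to be an irreducible component of its image. Since $\widetilde{Z'}$ is one-dimensional and the $E_{t,i}$ cannot all coincide as $t$ varies in the pencil, $W$ is a curve. There is a natural rational map $\mu \colon W \dashrightarrow Z \cong \mathbb{P}^{1}$ sending a generic $E \in W$ to the unique $t$ such that $E$ appears in the decomposition of $D_t$, and this map has generic degree $d$.

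Next, I would define a rational map $g \colon C \dashrightarrow W$ by sending a general $p \in C$ to the unique divisor $E_{t,i}$ (with $t = f(p)$) whose support contains $p$. This is well-defined on a dense open subset of $C$: for generic $t \in \mathbb{P}^1$ the divisor $D_t$ is reduced with $k$ distinct points, so the summands $E_{t,1}, \ldots, E_{t,d}$ have pairwise disjoint supports, and each $p \in \Supp(D_t)$ lies in exactly one of them. By construction $\mu \circ g = f$. Moreover, a generic $E \in W$ corresponds to a unique $t$ under $\mu$, and $g^{-1}(E) = \Supp(E)$ consists of $k'$ distinct points, so $g$ has degree $k'$. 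Taking $C'$ to be the normalization of the (irreducible) image of $g$ then produces the desired factorization $f \colon C \xrightarrow{g} C' \xrightarrow{\mu} \mathbb{P}^1$ with $\deg(g) = k'$.

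The only real obstacle is ruling out that $g$ is degenerate — either constant or of degree strictly less than $k'$ — and both potential issues are resolved by appealing to the genericity of $D_t$ in the pencil $Z$. A constant $g$ would force $f = \mu \circ g$ to be constant, which it is not; and reducedness of the generic $D_t$ combined with the disjointness of the supports of the $E_{t,i}$ pins down the generic fiber of $g$ as precisely $\Supp(E)$, forcing $\deg(g) = k'$.
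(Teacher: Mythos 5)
Your proof is correct and follows essentially the same route as the paper: your map $g$ (equivalently, its graph) is precisely the paper's incidence correspondence $I \subseteq C \times \Sym^{k'}(C)$, and your curve $W$ is the paper's $C'$. The only cosmetic difference is that you construct the map $\mu \colon C' \dashrightarrow \P^1$ directly and check $\mu \circ g = f$, whereas the paper observes that the fibers of $f$ are partitioned by the fibers of $C \dashrightarrow C'$ and invokes Lemma~\ref{facto}.
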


\begin{proof}
Consider the incidence correspondence
\[
I=_{\textup{def}}\left \{(c,\mathbf{x}=_{\textup{def}}x_1+\ldots+x_{k'}) \middle| \ \let\scriptstyle\textstyle \substack{c=x_1, \ \exists \  \mathbf{y}_1, \ldots, \mathbf{y}_{d-1}\in \Sym^{k'}(C), \\ {\text{and} \quad \mathbf{x}+\mathbf{y}_1+\ldots+\mathbf{y}_{d-1}} \in Z'} \right\} \subseteq C\times \text{Sym}^{k'}(C).
\]
Note that $I$ maps birationally onto $C$ under the first projection and generically finitely onto its image $C'$ under the second projection. The projection $C\longrightarrow C'$ has degree $k'$ and the fibers of $C\longrightarrow \mathbb{P}^1$ can be partitioned by the fibers of this map. By Lemma~\ref{facto}, we have a factorization as required.
\end{proof}

This completes the proof of Theorem~\ref{thm:largeg}.

\section{Further questions}

The results in this paper are all obtained under the assumption that $p_{g}(S) = 0$ by tracing $2$-forms from $C_{1} \times C_{2}$ to $S$. In this section, we will sketch our original approach which proceeded along the lines of \cite{Martin19} but was eventually absorbed by Theorem \ref{thm:largeg} and Theorem \ref{thm:hyp}.

Given a dominant rational map $\varphi: C_1\times C_2\dashrightarrow S$ of degree $k$ to a surface with $p_g(S)=q(S)=0$ and an open subset $U\subseteq C_1\times C_2$ such that $\varphi|_{U}$ is \'etale on its image, one may consider the cycles
\[Z_l=\overline{\{(x_1,\ldots, x_l)\in U^l: \varphi(x_i)=\varphi(x_j), x_i\neq x_j, \forall i,j, i\neq j \}}\subseteq (C_1\times C_2)^l.\]
The fact that the cycle class of the diagonal $\Delta_S$ belongs to the subset
\[ [S\times \text{pt}]+[\text{pt}\times S]+\textup{Sym}^2(NS(S))\subseteq H^4(S\times S,\mathbb{Z}) \]
imposes constraints on $[Z_2]$. Moreover, since $Z_k$ is in a fiber of the Abel-Jacobi map
\[ \alpha: (C_1\times C_2)^k\longrightarrow \textup{Alb}(C_1\times C_2), \]
the cohomology class $\alpha_*([Z_k])$ vanishes. It turns out that for $k=3$ these cohomological contraints together imply that
$\varphi^{-1}(\varphi|_U((C_1\times{\{x\}})\cap U))$ is a union of fibers of the second projection $C_1\times C_2\longrightarrow C_2$ for generic $x\in C_2$, and similarly with $C_1$ and $C_2$ interchanged. This is enough to show that the degree of irrationality of two hyperelliptic curves is $4$. However, for higher degree rational maps we were unable to salvage conclusive constraints from the vanishing of $\alpha_{*}([Z_k])$.

We leave the reader with some natural questions and problems.

\begin{problem}
Under similar hypothesis, extend Theorem~\ref{thm:largeg} to a product of $n$ curves.
\end{problem}

\begin{question}\label{questions}
Let $C_1, \ldots, C_{n}$ be very general curves:
\begin{enumerate}
    \item Is the degree of irrationality of $C_{1} \times \cdots\times C_n$ equal to $\gon(C_{1}) \cdots \gon(C_n)$?
    \item Is every rational map $C_1\times\cdots\times C_n\dashrightarrow \mathbb{P}^n$ of degree $\gon(C_{1}) \cdots \gon(C_{n})$ birationally equivalent to a product of rational maps $C_i\dashrightarrow \mathbb{P}^1$?
    \item Let $E$ be a very general elliptic curve and $C$ be a very general curve of genus $g \geq 1$. Does $E\times C$ admit a dominant rational map of degree $3$ to $\mathbb{P}^2$? More generally, does $E\times C$ admit a dominant rational map of degree $3$ to a surface with $p_g=0$?
\end{enumerate}
\end{question}

In the proofs of Theorems~\ref{thm:hyp} and \ref{thm:largeg} we only trace decomposable $2$-forms on $S$, namely forms $\text{pr}_1^*\omega_1\wedge \text{pr}_2^*\omega_2$, where $\omega_i$ is a $1$-form on $C_i$. Such $2$-forms make up a $(g(C_1)+g(C_2))$-parameter family whereas
\[ \dim H^0(C_1\times C_2,\Omega^2)=g(C_1)\cdot g(C_2). \] We expect that a lot more information can be captured by tracing an arbitrary $2$-form on $C_1\times C_2$.

For instance if $C_1$ and $C_2$ are very general curves of genus $2$ and $3$, we can show along the lines of Theorem~\ref{thm:hyp} that $\textup{irr}(C_1\times C_2)\geq 5$. On the other hand, we expect $\irr(C_{1} \times C_{2}) = 6$, so it suffices to rule out the possibility of a degree $5$ dominant rational map $\varphi: C_1\times C_2\dashrightarrow \mathbb{P}^2$. Consider the composition
\[ \Psi: C_1\times C_2 \ \xrightarrow{\ \psi_1\times \psi_2 \ } \  \mathbb{P}^1\times \mathbb{P}^2 \ \xrightarrow{\ \sigma \ }  \ \mathbb{P}^5, \]
where $\psi_i$ is the canonical map of $C_i$ and $\sigma$ is the Segre embedding. Given $t\in \mathbb{P}^2$ generic, we can use the fact that the points in $\Psi(\varphi^{-1}(t))$ fail to impose independent conditions on $2$-forms on $C_1\times C_2$ to show that they span a codimension $2$ linear subspace of $\mathbb{P}^5$. This linear subspace meets $\sigma(\mathbb{P}^1\times \mathbb{P}^2)$ in a curve whose projection to $\mathbb{P}^2$ is a conic. This conic $Q_t$ meets the quartic $\psi_2(C_2)\subseteq \mathbb{P}^2$ in $8$ points, 5 of which are the points of $\psi_2(\textup{pr}_2(\varphi^{-1}(t)))$. One can check that the three residual points $a$, $b$, and $c$ do not vary with $t$, so $\psi_2(\textup{pr}_2(\varphi^{-1}(t)))$ is contained in the linear system $|2K_{C_2}-a-b-c|$. Although this does not give an immediate contradiction, we do get strong constraints on $\varphi$ which cannot be obtained by only tracing decomposable $2$-forms.


\bibliographystyle{siam} 
\bibliography{Biblio}

\end{document}